\numberwithin{equation}{section}
\theoremstyle{plain}
\newtheorem{theorem}{Theorem}[section]
\newtheorem{proposition}[theorem]{Proposition}
\newtheorem{lemma}[theorem]{Lemma}
\newtheorem{corollary}[theorem]{Corollary}
\theoremstyle{definition}
\theoremstyle{remark}
\renewcommand{\bar}{\overline}
\newcommand{\abs}[1]{\lvert#1\rvert}
\newcommand{\R}{\mathbb{R}}
\newcommand{\N}{\mathbb{N}}
\newcommand{\Z}{\mathbb{Z}}
\newcommand{\Q}{\mathbb{Q}}
\newcommand{\C}{\mathbb{C}}
\tikzstyle{vertex}=[circle, draw, fill=black, inner sep=0pt, minimum width=6pt]
\tikzstyle{vert}=[circle, draw=black, fill=white, inner sep=0pt, minimum width=6pt]
\tikzstyle{zc}=[circle, draw, fill=black, inner sep=0pt, minimum width=6pt]
\tikzstyle{pc}=[circle, fill=white, draw=black, inner sep=1pt, minimum width=10pt, font=\tiny] 
\tikzstyle{nc}=[circle, draw=black, inner sep=1pt, minimum width=10pt, font=\tiny] 
\tikzstyle{pedge}=[draw,-]
\tikzstyle{wedge}=[draw,-,postaction={decorate}, decoration={markings,mark = at position 0.55 with {\arrow{stealth} } }]
\tikzstyle{dpedge}=[draw,-,postaction={decorate}]
\tikzstyle{wwedge}=[draw,-,postaction={decorate}, decoration={markings,mark = at position 0.5 with {\arrow{stealth} }, mark = at position 0.6 with {\arrow{stealth} } }]
\tikzstyle{wwedge2}=[draw,-,postaction={decorate}, decoration={markings,mark = at position 0.45 with {\arrow{stealth} }, mark = at position 0.65 with {\arrow{stealth} } }]
\tikzstyle{wwwedge}=[draw,-,postaction={decorate}, decoration={markings,mark = at position 0.65 with {\arrow{stealth} },mark = at position 0.45 with {\arrow{stealth} }, mark = at position 0.55 with {\arrow{stealth} } }]
\tikzstyle{wwwedge2}=[draw,-,postaction={decorate}, decoration={markings,mark = at position 0.7 with {\arrow{stealth} },mark = at position 0.4 with {\arrow{stealth} }, mark = at position 0.55 with {\arrow{stealth} } }]
\tikzstyle{wwwwedge}=[draw,-,postaction={decorate}, decoration={markings,mark = at position 0.7 with {\arrow{stealth} },mark = at position 0.4 with {\arrow{stealth} }, mark = at position 0.5 with {\arrow{stealth} }, mark = at position 0.6 with {\arrow{stealth} } }]
\tikzstyle{wwwnedge}=[draw,densely dashed,postaction={decorate}, decoration={markings,mark = at position 0.65 with {\arrow{stealth} },mark = at position 0.45 with {\arrow{stealth} }, mark = at position 0.55 with {\arrow{stealth} } }]
\tikzstyle{wwnedge}=[draw,densely dashed,postaction={decorate}, decoration={markings,mark = at position 0.5 with {\arrow{stealth} }, mark = at position 0.6 with {\arrow{stealth} } }]
\tikzstyle{wnedge}=[draw,densely dashed,postaction={decorate}, decoration={markings,mark = at position 0.55 with {\arrow{stealth} } }]
\tikzstyle{wnedge2}=[draw,densely dashed,postaction={decorate}, decoration={markings,mark = at position 0.6 with {\arrow{stealth} } }]
\tikzstyle{dnedge}=[draw,densely dashed,postaction={decorate}]
\tikzstyle{nedge}=[draw,densely dashed]
\tikzstyle{weight2}= [draw=white, fill=white, font=\scriptsize]
\tikzstyle{weight}= [font=\scriptsize]
\tikzstyle{empty}=[circle, draw=white, inner sep=2pt, fill=white, minimum width=4pt]
\tikzstyle{ghost}=[circle, draw=black, inner sep=1pt, style=densely dashed, minimum width=6pt, font=\tiny]
\tikzstyle{ghostc}=[circle, draw=black, inner sep=1pt, style=densely dashed, minimum width=10pt, font=\tiny]
\tikzstyle{dedge}=[draw,very thick,dotted]
\renewcommand{\bar}{\overline}
\title[Small-span Hermitian matrices over quadratic integer rings]{Small-span Hermitian matrices over \\ quadratic integer rings}
\author{ Gary Greaves }
\address{Research Center for Pure and Applied Mathematics, Graduate School of Information Sciences, Tohoku University, Sendai, Japan.}
\email{grwgrvs@ims.is.tohoku.ac.jp}
\date{}
\subjclass[2010]{Primary 11C08, 15B36. Secondary 05C22, 05C50, 11C20, 15B33}
\begin{document}
	
	\begin{abstract}
		A totally-real polynomial in $\Z[x]$ with zeros $\alpha_1 \leqslant \alpha_2 \leqslant \dots \leqslant \alpha_d$ has span $\alpha_d - \alpha_1$.
		Building on the classification of all characteristic polynomials of integer symmetric matrices having small span (span less than $4$), we obtain a classification of small-span polynomials that are the characteristic polynomial of a Hermitian matrix over some quadratic integer ring.
		Taking quadratic integer rings as our base, we obtain as characteristic polynomials some low-degree small-span polynomials that are not the characteristic (or minimal) polynomial of any integer symmetric matrix.
	\end{abstract}

\maketitle

\section{Introduction and preliminaries}

\subsection{Introduction}

Let $f$ be a totally-real monic polynomial of degree $d$ having integer coefficients with zeros $\alpha_1 \leqslant \alpha_2 \leqslant \dots \leqslant \alpha_d$.
The \textbf{span} of $f$ is defined to be $\alpha_d - \alpha_1$.
In this article, for Hermitian matrices $A$ over quadratic integer rings, we are interested in the characteristic polynomials $\chi_A(x) := \det(xI-A)$ and the minimal polynomials $p_A(x)$.
We build on the work of McKee~\cite{McKee:SmallSpan10} who classified all integer symmetric matrices whose characteristic polynomials have span less than $4$.
Following McKee, we call a totally-real monic integer polynomial \textbf{small-span} if its span is less than $4$ and a Hermitian matrix is called small-span if its characteristic polynomial is small-span.

A set consisting of an algebraic integer and all of its (Galois) conjugates is called a \textbf{conjugate set}.
Given a real interval $I$ of length $l$, it is an interesting question as to how many conjugate sets are contained in $I$.
For $l > 4$ it is known that $I$ contains infinitely many conjugate sets \cite{Rob62:largeSpan}.
For $l = 4$, with $I = [a,a+4]$, for some $a \in \Z$, there are infinitely many conjugate sets contained in $I$ \cite{Kron:cyclo57} -- these conjugate sets correspond to cosine polynomials and their translates (see Section~\ref{sec:4}).
For $a \not \in \Z$ the question is still open.
On the other hand, if $l < 4$ then there are only finitely many conjugate sets contained in $I$ \cite{Schur:FiniteSmallSpan}.
Therefore, up to equivalence (described below), there are only finitely many irreducible small-span polynomials of any given degree.
A list of small-span polynomials up to degree $8$ was produced by Robinson~\cite{Rob:SmallSpan64}, (this list has been recently extended up to degree $14$ \cite{Cap:SmallSpan10} and even more recently up to degree $15$ \cite{Flamm11:smallspan}). 

Let $K$ be a quadratic extension of $\Q$ and let $\mathcal O_K$ be its ring of integers.
We classify all small-span Hermitian matrices over $\mathcal O_K$ for all quadratic number fields $K$.
In doing so, we obtain, as characteristic polynomials of symmetric $\mathcal O_K$-matrices, small-span polynomials that are not the minimal polynomial of any Hermitian $\Z$-matrix.
But, as we shall see, there still remain some that are not even minimal polynomials of any Hermitian $\mathcal O_K$-matrix.

Let $R \subset \C$ be a (unitary) ring and let $A$ be a Hermitian matrix over $R$.
Both $p_A(x)$ and $\chi_A(x)$ are monic polynomials in $R[x]$.
Since $A$ is Hermitian, $p_A(x)$ is separable and both $p_A(x)$ and $\chi_A(x)$ are totally real.
Let $\mathfrak T$ be the set of all totally-real separable monic integer polynomials and let $H(R)$ be the set of all Hermitian matrices over $R$.
Define
\[
	\mathfrak S(R) := \left \{ f(x) \in \mathfrak T \; | \; p_M(x) = f(x),\; \exists M \in H(R) \right \},
\]
and
\[
	\mathfrak d(R) := \min \left \{ \deg f(x) \; |\; f(x) \in \mathfrak T \backslash \mathfrak S(R) \right \}.
\]
Of course $\mathfrak d(R)$ may not exist for some rings $R$.
For example, when $R = \R$ we write $\mathfrak d(R) = \infty$.
Clearly, for $R_1 \subset R_2 \subset \C$, we have $\mathfrak d(R_1) \leqslant \mathfrak d(R_2)$.

Estes and Guralnick~\cite{EsGu:MinPolISM} were interested in the set $\mathfrak S(\Z)$.
They proved that $\mathfrak S(\Z)$ contains all degree-$k$ totally-real separable monic integer polynomials for $k \leqslant 4$, giving $\mathfrak d(\Z) > 4$.
In fact, what they proved is even stronger: they showed that for any monic separable totally-real integer polynomial $f$ of degree $n$ at most $4$, there exists a $2n \times 2n$ integer symmetric matrix having $f$ as its minimal polynomial.

It is well established \cite{Estes:Eigen92} that, for every totally-real algebraic integer $\alpha$, there exists an integer symmetric matrix $A$ having $\alpha$ as an eigenvalue, and hence the minimal polynomial of $\alpha$ divides $p_A$.
Hoffman~\cite{Hoff:EigenOGraph75,Hoffman:1972fk} showed that this result implied the same result for the adjacency matrices of undirected graphs without loops or multiple edges.
In their paper concerned with the set $\mathfrak S(\Z)$, Estes and Guralnick~\cite[page 84]{EsGu:MinPolISM} made the conjecture that any monic separable totally-real integer polynomial is the minimal polynomial of some integer symmetric matrix.
This conjecture was shown to be false by Dobrowolski~\cite{Dob:Int08} who showed that if a degree $n$ polynomial $f$ is the minimal polynomial of an integer symmetric matrix then the absolute value of its discriminant is at least $n^n$.
The falsity of the conjecture was deduced by producing an infinite family of totally-real monic separable integer polynomials violating this bound.
The lowest degree of any of these polynomials is $2880$, giving $5 \leqslant \mathfrak d(\Z) \leqslant 2880$.
It was shown in \cite{McKee:SmallSpan10} that there exist polynomials of low degree that are not the minimal polynomial of any integer symmetric matrix, the lowest degree being $6$.
Therefore $\mathfrak d(\Z) \in \{5,6\}$.
This is a consequence of the classification of small-span $\Z$-matrices; on Robinson's list \cite{Rob:SmallSpan64} of small-span polynomials, there are three polynomials of degree $6$ that are not the minimal polynomials of any small-span integer symmetric matrix and hence of any integer symmetric matrix.
It is still unknown whether $\mathfrak d(\Z) = 5$ or $6$.

In this paper we characterise every small-span Hermitian matrix over a quadratic integer ring.
In Section~\ref{sec:2} we describe our computations and the classification is proved in Section~\ref{sec:3}.
In Section~\ref{sec:4}, using a quadratic integer ring as our base, we find as characteristic polynomials some degree-$6$ small-span polynomials that are not the characteristic (or minimal) polynomial of any integer symmetric matrix.
We also exhibit a degree-$6$ polynomial that is not the characteristic or minimal polynomial of any Hermitian matrix over any quadratic integer ring.
Hence we obtain the following theorem
\begin{theorem}\label{thm:main}
	Let $K$ be a quadratic number field.
	Then $\mathfrak d(\mathcal O_K) \in \{5,6\}$. 
\end{theorem}

\subsection{Equivalence and graphical interpretation}

Let $S$ be a subset of $\C$.
We find it convenient to view Hermitian $S$-matrices as the adjacency matrices of weighted directed graphs $G$ with edge-weights $w$.
So we define these as the pair $(G,w)$ where $w$ maps pairs of (not necessarily distinct) vertices of $G$ to $S$ and satisfies $w(u,v) = \bar{w(v,u)}$.
The adjacency matrix of $(G,w)$ is given by $A = (w(u,v))$.
We call $G = (G,w)$ an \textbf{$S$-graph}.
This way of viewing Hermitian matrices has been used before in \cite{McKee:IntSymCyc07,McKee:SmallSpan10, GTay:cyclos10, Greaves:CycloEG11, Greaves:CycloRQ11}.
By a \textbf{subgraph} $H$ of $G$ we mean an induced subgraph: a subgraph obtained by deleting vertices and their incident edges.
Taking subgraphs corresponds to taking principal submatrices of the adjacency matrix.
We also say that $G$ \textbf{contains} $H$ and that $G$ is a \textbf{supergraph} of $H$.
The notions of a cycle/path/triangle etc.\ carry through in an obvious way from those of an undirected unweighted graph.
We call a vertex $v$ \textbf{charged} with charge $w(v,v)$ if $w(v,v) \ne 0$, otherwise $v$ is called \textbf{uncharged}.
A graph is called \textbf{charged} if it contains at least one charged vertex, and \textbf{uncharged} otherwise.
We will interchangeably speak of both graphs and their adjacency matrices.


Now we describe equivalence.
We write $M_n(R)$ for the ring of $n \times n$ matrices over a ring $R \subseteq \C$. 
Let $U_n(R)$ denote the unitary group of matrices $Q$ in $M_n(R)$.
These satisfy $QQ^* = Q^*Q = I$, where $Q^*$ denotes the Hermitian transpose of $Q$.
Conjugation of a matrix $M \in M_n(R)$ by a matrix in $U_n(R)$ preserves the eigenvalues of $M$ and the base ring $R$.
Now, $U_n(R)$ is generated by permutation matrices and diagonal matrices of the form
\[
	\operatorname{diag}(1,\dots,1,u,1,\dots,1),
\]
where $u$ is a unit in $R$.
Let $D$ be such a diagonal matrix having $u$ in the $j$-th position.
Conjugation by $D$ is called a $u$-\textbf{switching} at vertex $j$.
This has the effect of multiplying all the out-neighbour edge-weights $w(j,l)$ of $j$ by $u$ and all the in-neighbour edge-weights $w(l,j)$ of $j$ by $\bar u$.
The effect of conjugation by permutation matrices is just a relabelling of the vertices of the corresponding graph.
Let $L$ be the Galois closure of the field generated by the elements of $R$ over $\Q$.
Let $A$ and $B$ be two matrices in $M_n(R)$.
We say that $A$ is \textbf{strongly equivalent} to $B$ if $A = \sigma(QBQ^*)$ for some $Q \in U_n(R)$ and some $\sigma \in \operatorname{Gal}(L/\Q)$, where $\sigma$ is applied componentwise to $QBQ^*$.
Let $f$ and $g$ be totally-real monic integer polynomials of degree $d$ having zeros $\alpha_j$ and $\beta_j$ respectively. 
We consider $f$ and $g$ to be \textbf{equivalent} if for some $c \in \Z$ and $\varepsilon = \pm 1$ each $\alpha_j = \varepsilon \beta_j + c$.
It is clear that the span is preserved under this equivalence.
If a totally-real monic integer polynomial has span less than $4$ then it is equivalent to a monic integer polynomial whose zeros are all contained inside the interval $[-2,2.5)$.
By setting $\varepsilon = 1$ and $c = \lfloor \alpha_d \rfloor - 2$, one can see that each small-span polynomial is equivalent to a monic integer polynomial whose zeros are contained inside the interval $[-2,3)$.
Moreover, suppose that $f$ is a small-span polynomial with $2.5 \leqslant \alpha_d < 3$.
Setting $\varepsilon = -1$ and $c = 1$, one can see that, in fact, $f$ and hence each small-span polynomial, is equivalent to a monic integer polynomial whose zeros are contained inside the interval $[-2,2.5)$.
The matrices $A$ and $B$ are called \textbf{equivalent} if $A$ is strongly equivalent to $\pm B + cI$ for some $c \in \Z$.
Observe that two small-span $\mathcal O_K$-matrices are equivalent precisely when their characteristic polynomials are equivalent. 
The notions of equivalence and strong equivalence carry through to graphs in the natural way and, since all possible labellings of a graph are strongly equivalent, we do not need to label the vertices.

When we say ``$G$ is a graph,'' we mean that $G$ is some $S$-graph where $S$ is some subset of $\C$ and for our notion of equivalence, we take $R$ to be the ring generated by the elements of $S$.

\section{Computation of small-span matrices up to size $8 \times 8$}
\label{sec:2}

In this section we describe our computations and deduce some restrictions to make the computations feasible.

\subsection{Interlacing and row restrictions}
A Hermitian matrix $A$ is called \textbf{decomposable} if there exists a permutation matrix $P$ such that $PAP^\top$ is a block diagonal matrix of more than one block; otherwise $A$ is called \textbf{indecomposable}.
In the classification of small-span Hermitian matrices, it suffices to consider only indecomposable matrices since the set of eigenvalues of a decomposable matrix is just the collection of the eigenvalues of each of its blocks.

The following theorem of Cauchy~\cite{Cau:Interlace,Fisk:Interlace05,Hwang:Interlace04} plays a central r\^{o}le in this paper.

\begin{lemma}[Interlacing Theorem] \label{lem:interlacing} Let $A$ be an $n~\times~n$ Hermitian matrix with eigenvalues $\lambda_1 \leqslant \dots \leqslant \lambda_n$. Let $B$ be an $(n - 1)~\times~(n - 1)$ principal submatrix of $A$ with eigenvalues $\mu_1 \leqslant \dots \leqslant \mu_{n-1}$. Then the eigenvalues of $A$ and $B$ interlace. Namely,
	\[
		\lambda_1 \leqslant \mu_1 \leqslant \lambda_2 \leqslant \mu_2 \leqslant \dots \leqslant \mu_{n-1} \leqslant \lambda_n.
	\]
\end{lemma}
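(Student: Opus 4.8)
The plan is to prove this via the Courant--Fischer min--max characterisation of the eigenvalues of a Hermitian matrix, combined with a dimension-counting argument. First, since conjugating $A$ by a permutation matrix permutes its rows and columns identically and leaves the eigenvalues unchanged, I would assume without loss of generality that $B$ is obtained from $A$ by deleting the final row and column. Identifying $\C^{n-1}$ with the subspace $W = \{x \in \C^n : x_n = 0\}$, one checks immediately that for $x = (y,0) \in W$ the Rayleigh quotients agree: $x^* A x = y^* B y$ and $x^* x = y^* y$. Writing $R_A(x) = \frac{x^* A x}{x^* x}$, the key point is thus that $R_A$ restricted to $W \setminus \{0\}$ coincides with $R_B$. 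I would then invoke the Courant--Fischer theorem, itself a standard consequence of the spectral theorem applied to $A$ and $B$: for each $k$,
\[
	\lambda_k = \min_{\dim V = k} \ \max_{0 \neq x \in V} R_A(x), \qquad \mu_k = \min_{\dim U = k} \ \max_{0 \neq y \in U} R_B(y),
\]
where $V$ and $U$ range over subspaces of $\C^n$ and $\C^{n-1}$ of the indicated dimensions.

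For the lower inequalities $\lambda_k \leqslant \mu_k$ (for $1 \leqslant k \leqslant n-1$), I would let $w_1, \dots, w_{n-1}$ be an orthonormal eigenbasis for $B$, viewed inside $W$, and take $V = \operatorname{span}(w_1, \dots, w_k)$, a $k$-dimensional subspace of $\C^n$. Since every nonzero $x \in V$ lies in $W$ and is a combination of eigenvectors of $B$ with eigenvalues at most $\mu_k$, we get $R_A(x) = R_B(x) \leqslant \mu_k$; feeding this $V$ into the min--max formula for $\lambda_k$ yields $\lambda_k \leqslant \mu_k$.

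For the upper inequalities $\mu_k \leqslant \lambda_{k+1}$ (for $1 \leqslant k \leqslant n-1$), I would use an intersection argument. Let $v_1, \dots, v_n$ be an orthonormal eigenbasis for $A$, put $U = \operatorname{span}(w_k, \dots, w_{n-1}) \subseteq W$, of dimension $n-k$, and $V = \operatorname{span}(v_1, \dots, v_{k+1})$, of dimension $k+1$. As subspaces of $\C^n$ their dimensions sum to $n+1 > n$, so there is a nonzero $z \in U \cap V$. Since $z \in U \subseteq W$ we have $R_A(z) = R_B(z) \geqslant \mu_k$, while $z \in V$ forces $R_A(z) \leqslant \lambda_{k+1}$; hence $\mu_k \leqslant \lambda_{k+1}$. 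Combining the two families of inequalities gives $\lambda_1 \leqslant \mu_1 \leqslant \lambda_2 \leqslant \dots \leqslant \mu_{n-1} \leqslant \lambda_n$, as required.

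The bulk of the argument is bookkeeping, and I expect no serious obstacle. The one point needing care is getting the dimension counts right so that both chains of inequalities emerge from the single coordinate subspace $W$: the lower bounds come from placing a small eigenspace of $B$ inside $\C^n$, while the upper bounds come from forcing an intersection between an eigenspace of $B$ sitting in $W$ and an eigenspace of $A$. If one preferred not to cite Courant--Fischer as a black box, the only remaining task would be to derive that characterisation from the spectral theorem, which is itself a short min--max argument.
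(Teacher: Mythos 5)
Your proof is correct: both chains of inequalities are established soundly, and the dimension counts (a $k$-dimensional span of eigenvectors of $B$ inside $W$ for the lower bounds; $(n-k)+(k+1)=n+1>n$ forcing the intersection for the upper bounds) are right. There is, however, nothing internal to compare it against: the paper does not prove this lemma at all, but states it as the classical Interlacing Theorem of Cauchy and cites the literature for a proof, so any complete argument such as yours goes beyond what the paper supplies. Two remarks on your route. First, your second half never actually uses the Courant--Fischer characterisation: once you have the nonzero vector $z \in U \cap V$, the bounds $\mu_k \leqslant R_B(z) = R_A(z) \leqslant \lambda_{k+1}$ follow directly from the spectral decompositions of $B$ and $A$. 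Second, the first half can be phrased the same way: intersect $\operatorname{span}(w_1,\dots,w_k) \subseteq W$ (dimension $k$) with $\operatorname{span}(v_k,\dots,v_n)$ (dimension $n-k+1$); a nonzero $z$ in the intersection satisfies $\lambda_k \leqslant R_A(z) = R_B(z) \leqslant \mu_k$. Thus the entire lemma follows from two elementary intersection arguments and the spectral theorem alone, which disposes of the reservation in your closing paragraph about needing to derive Courant--Fischer from scratch; this purely dimension-theoretic proof is essentially the one in the survey of Hwang that the paper cites.
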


\begin{corollary}\label{cor:interlaceSpan}
	Let $A$ be an $n \times n$ Hermitian matrix with $n \geqslant 2$, and let $B$ be an $(n-1) \times (n-1)$ principal submatrix. Then the span of $A$ is at least as large as the span of $B$. Moreover, if $A$ has all its eigenvalues in the interval $[-2,2.5)$, then so does $B$. 
\end{corollary}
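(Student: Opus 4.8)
The plan is to derive this corollary directly from the Interlacing Theorem (Lemma~\ref{lem:interlacing}). Let $A$ have eigenvalues $\lambda_1 \leqslant \dots \leqslant \lambda_n$ and let $B$ be an $(n-1) \times (n-1)$ principal submatrix with eigenvalues $\mu_1 \leqslant \dots \leqslant \mu_{n-1}$. The span of $A$ is $\lambda_n - \lambda_1$ and the span of $B$ is $\mu_{n-1} - \mu_1$. The interlacing inequalities give $\lambda_1 \leqslant \mu_1$ at the bottom and $\mu_{n-1} \leqslant \lambda_n$ at the top, so $\mu_{n-1} - \mu_1 \leqslant \lambda_n - \lambda_1$ immediately, which is the first assertion.

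For the second assertion, I would read off the one-sided bounds from the same interlacing chain. If all eigenvalues of $A$ lie in $[-2,2.5)$, then in particular $-2 \leqslant \lambda_1$ and $\lambda_n < 2.5$. Combining with the outer interlacing inequalities $\lambda_1 \leqslant \mu_1$ and $\mu_{n-1} \leqslant \lambda_n$ yields $-2 \leqslant \lambda_1 \leqslant \mu_1$ and $\mu_{n-1} \leqslant \lambda_n < 2.5$. Since the remaining eigenvalues of $B$ satisfy $\mu_1 \leqslant \mu_j \leqslant \mu_{n-1}$ for all $j$, every eigenvalue of $B$ lies in $[-2,2.5)$ as claimed.

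I do not anticipate a genuine obstacle here: the corollary is an essentially immediate consequence of retaining only the extreme inequalities from the interlacing chain. The one point requiring a modest amount of care is that the interval $[-2,2.5)$ is half-open, so I should keep the strict inequality at the upper end ($\lambda_n < 2.5$ forces $\mu_{n-1} < 2.5$) and the non-strict inequality at the lower end; but since interlacing preserves the direction of each comparison, the half-open interval is respected without difficulty. The hypothesis $n \geqslant 2$ is exactly what guarantees that a principal submatrix of size $n-1 \geqslant 1$ exists, so Lemma~\ref{lem:interlacing} applies.
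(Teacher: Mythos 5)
Your proof is correct and matches the paper's intent exactly: the paper states this as an immediate corollary of the Interlacing Theorem (Lemma~\ref{lem:interlacing}) without writing out a proof, and the argument it relies on is precisely yours — retain the outer inequalities $\lambda_1 \leqslant \mu_1$ and $\mu_{n-1} \leqslant \lambda_n$ to compare spans and to transfer the eigenvalue bounds, with the half-open interval handled just as you describe.
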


In view of this corollary, given a matrix that contains a matrix that it not equivalent to a small-span matrix having all of its eigenvalues in the interval $[-2,2.5)$, we can instantly disregard it since it is not a small-span matrix.
In fact, by Corollary~\ref{cor:interlaceSpan}, we can iteratively compute small-span matrices as described below.

Using interlacing (Lemma~\ref{lem:interlacing}) we can rapidly restrict the possible entries for the matrices that are of interest to us. This is just a trivial modification of a lemma~\cite[Lemma 4]{McKee:SmallSpan10} used in the classification of small-span integer symmetric matrices.

\begin{lemma}\label{lem:restrictEntries}
	Let $A$ be a small-span Hermitian matrix. Then all entries of $A$ have absolute value less than $2.5$, and all off-diagonal entries have absolute value less than $2$.
\end{lemma}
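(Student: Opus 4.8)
The plan is to bound the diagonal entries and the off-diagonal entries separately, in each case extracting the bound from the Interlacing Theorem (Lemma~\ref{lem:interlacing}) applied to a suitable small principal submatrix. Throughout I would use the normalisation, justified by the equivalence reductions recorded in Section~1, that $A$ may be taken to have all of its eigenvalues $\lambda_1 \leqslant \dots \leqslant \lambda_n$ in the interval $[-2,2.5)$; this is the one place in the argument where the precise interval, rather than merely the bound $\lambda_n - \lambda_1 < 4$, is used.

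For a diagonal entry $a_{ii}$, I would look at the $1 \times 1$ principal submatrix $(a_{ii})$, whose unique eigenvalue is $a_{ii}$ itself. Applying interlacing inductively, deleting one row and column at a time from $A$ down to this single entry, shows that $a_{ii}$ lies in $[\lambda_1, \lambda_n] \subseteq [-2,2.5)$. Since then $-2.5 < -2 \leqslant a_{ii} < 2.5$, this yields $\abs{a_{ii}} < 2.5$ immediately.

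For an off-diagonal entry $a_{ij}$ with $i \neq j$, I would consider the $2 \times 2$ principal submatrix
\[
	B = \begin{pmatrix} a_{ii} & a_{ij} \\ \bar{a_{ij}} & a_{jj} \end{pmatrix},
\]
which is Hermitian with real diagonal. Its eigenvalues are $\tfrac12(a_{ii}+a_{jj}) \pm \sqrt{\tfrac14(a_{ii}-a_{jj})^2 + \abs{a_{ij}}^2}$, so the span of $B$ equals $2\sqrt{\tfrac14(a_{ii}-a_{jj})^2 + \abs{a_{ij}}^2} \geqslant 2\abs{a_{ij}}$. By Corollary~\ref{cor:interlaceSpan} the span of $B$ is at most the span of $A$, which is less than $4$; hence $2\abs{a_{ij}} < 4$, that is, $\abs{a_{ij}} < 2$. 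Note that this step appeals only to the span being less than $4$ and needs no recourse to the normalisation.

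The computations are entirely routine, so I do not expect a serious obstacle. The only points deserving care are, first, that the diagonal bound genuinely requires the normalisation to the interval $[-2,2.5)$ and not merely the smallness of the span (a small-span matrix can have arbitrarily large diagonal entries before normalising), and second, that one should confirm the strict inequalities: the half-open interval endpoint $2.5$ together with the lower endpoint $-2$ is exactly what forces $\abs{a_{ii}} < 2.5$, and the strict bound on the span of $A$ is what forces $\abs{a_{ij}} < 2$.
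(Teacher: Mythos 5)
Your proof is correct and follows essentially the same route as the paper's: interlacing down to the $1 \times 1$ submatrix (together with the normalisation to eigenvalues in $[-2,2.5)$) for the diagonal bound, and the span of the $2 \times 2$ principal submatrix $\begin{pmatrix} a_{ii} & a_{ij} \\ \bar{a_{ij}} & a_{jj} \end{pmatrix}$ combined with Corollary~\ref{cor:interlaceSpan} for the off-diagonal bound. Your explicit remarks about where the half-open interval is needed versus where the span bound alone suffices match the paper's (more tersely stated) reasoning exactly.
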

\begin{proof}
	Let $a$ be a diagonal entry in $A$. 
	Then since $(a)$ has $a$ as an eigenvalue, interlacing shows that $A$ has an eigenvalue with modulus at least $|a|$. 
	Our restriction on the eigenvalues of $A$ shows that $|a| < 2.5$.
	
	Let $b$ be an off-diagonal entry of $A$.
	Then deleting the other rows and columns gives a submatrix of the shape
	$$\begin{pmatrix}
		a & b \\ \bar{b} & c
	\end{pmatrix}.$$
	By repeated use of Corollary~\ref{cor:interlaceSpan}, this submatrix must have span less than $4$, giving $\sqrt{(a-c)^2+4\abs{b}^2} < 4$.
	This implies $|b|< 2$.
\end{proof}

\subsection{Small-span matrices over quadratic integer rings}

In order to obtain our results, we use a certain amount of computation.
By Corollary~\ref{cor:interlaceSpan}, if $A$ is a small-span matrix then so is any principal submatrix of $A$.
Hence, to compute small-span matrices we can start by creating a list of all $1 \times 1$ small-span matrices, then for each matrix in the list we consider all of its supergraphs and add to our list any that are small-span.
The list is then pruned with the goal of having at most one representative for each equivalence class.
Since there is no canonical form in our equivalence class, we can only prune our list to some limited extent.
We can repeat this growing process until all small-span matrices of the desired size are obtained.
The algorithm we use is essentially the same as the one described in \cite{McKee:SmallSpan10}, with modifications to deal with irrational elements.
Up to equivalence, for $d \in \Z$, we compute all small-span $\mathcal O_{\Q(\sqrt{d})}$-matrices up to size $8 \times 8$, and in doing so, we also compute Hermitian matrices whose eigenvalues satisfy the small-span condition but whose characteristic polynomials do not have integer coefficients.
As it turns out, these do not cause a problem: we will see that for $n > 6$, an $n \times n$ matrix whose eigenvalues satisfy the small-span condition also has an integer characteristic polynomial.

To make our computation more efficient, we can bound the number of nonzero entries in a row of a small-span matrix.
The amount of computation required to prove this lemma varies according to the ring over which we are working.
The ring $\mathcal O_{\Q(\sqrt{-3})} = \Z[\omega]$ (where we take $\omega = 1/2 + \sqrt{-3}/2$) requires the most work.

\begin{lemma}\label{lem:boundRowEntries}
Let $A$ be a small-span Hermitian $\Z[\omega]$-matrix with all eigenvalues in the interval $[-2,2.5)$.
Then each row of $A$ has at most $4$ nonzero entries.
\end{lemma}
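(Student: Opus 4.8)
The plan is to bound the off-diagonal weights in a single row using the interlacing machinery already established, then argue that too many nonzero entries in one row forces a principal submatrix whose eigenvalues escape the interval $[-2,2.5)$, contradicting Corollary~\ref{cor:interlaceSpan}. The key structural observation is this: if a row (say corresponding to vertex $v$) has $k$ nonzero off-diagonal entries $w(v,u_1),\dots,w(v,u_k)$, then the principal submatrix on the vertex set $\{v,u_1,\dots,u_k\}$ — a star centered at $v$, possibly with a charge at $v$ — is itself a small-span matrix, and I can compute or bound its eigenvalues explicitly in terms of the weights.

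**First I would** recall from Lemma~\ref{lem:restrictEntries} that every off-diagonal entry $b$ satisfies $|b| < 2$, and since we work over $\Z[\omega]$ where $\omega = e^{2\pi i/3}$ (so $\mathcal O_K = \Z[\omega]$ with $K = \Q(\sqrt{-3})$), the possible nonzero values of $|b|^2$ are constrained to a discrete set of small algebraic integers, namely $|b|^2 \in \{1,3,\dots\}$ with $|b|^2 < 4$. After a suitable switching I may normalize the charge $w(v,v)$ and simplify several of the weights to real positive representatives. The crucial quantity is $\sum_{i} |w(v,u_i)|^2$, the squared norm of the off-diagonal part of the row. For the star submatrix centered at an uncharged vertex $v$ with uncharged leaves and no edges among the leaves, the nonzero eigenvalues are $\pm\sqrt{\sum_i |w(v,u_i)|^2}$, so if this sum is at least $4$ the span is at least $4$, forcing $\sum_i |w(v,u_i)|^2 < 4$. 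Since each term is at least $1$, this immediately gives at most $3$ neighbours in that clean case.

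**The main obstacle** — and the reason $\Z[\omega]$ requires the most work — is that the leaves need not be uncharged and there may be edges among the $u_i$, so the submatrix is not a pure star and its eigenvalues are not given by a clean closed form. Here the span can be as large as permitting $4$ neighbours when the weights are the smallest possible (all of norm $1$, giving off-diagonal norm-square $4$), precisely the boundary case, so the bound of $4$ rather than $3$ is genuine and delicate. To handle this I would enumerate, up to switching and the Galois action defining strong equivalence, the finitely many configurations of a charged/uncharged center with $k=5$ neighbours whose pairwise weights lie in the admissible set, and show in each case that some eigenvalue falls outside $[-2,2.5)$. Because a row with $5$ nonzero off-diagonal entries yields off-diagonal norm-square at least $5 > 4$, the center-plus-leaves submatrix already strains the interval; the work is in confirming that no charge on the center or compensating structure among the leaves can pull all eigenvalues back inside, which I expect to reduce to checking a bounded list of small Hermitian matrices by the interlacing bound together with the trace and trace-of-square identities $\sum \lambda_i = \operatorname{tr} A$ and $\sum \lambda_i^2 = \operatorname{tr} A^2$.

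**I would close** by noting that these two trace identities give the sharpest elementary leverage: if all eigenvalues lie in $[-2,2.5)$ then $\sum_i \lambda_i^2 < \tfrac{25}{4}(n)$ is too weak alone, but combined with the constraint that the diagonal contributes the charges and $\operatorname{tr} A^2 = \sum_i \lambda_i^2$ equals the sum of all squared entries (diagonal squares plus $2\sum_{i<j}|a_{ij}|^2$), a single row contributing $5$ off-diagonal entries of norm-square at least $1$ forces $\operatorname{tr} A^2$ up by at least $10$ from that row alone, which together with interlacing on the relevant $6\times 6$ star-type submatrix produces an eigenvalue below $-2$ or at least $2.5$. Verifying the residual boundary configurations over $\Z[\omega]$ is the computational heart of the lemma, and I would carry it out by the same supergraph-growing enumeration described above for this specific ring.
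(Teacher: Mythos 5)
Your proposal is, in substance, the paper's own proof: both arguments reduce, via interlacing (Corollary~\ref{cor:interlaceSpan}), the existence of a vertex of degree at least $5$ to the existence of a $6$-vertex small-span principal submatrix with a degree-$5$ vertex, and then kill that finite case by an exhaustive computer enumeration over $\Z[\omega]$ --- the paper by growing all small-span $\Z[\omega]$-graphs on up to $6$ vertices and observing that none has a vertex of degree exceeding $4$, you by enumerating the degree-$5$-centre configurations (and, as you say yourself, ultimately by the same supergraph-growing computation). The analytic scaffolding you add (star eigenvalues, the trace identities) is correct as far as it goes but, as you concede, cannot close the degree-$5$ case on its own, so the finite computation remains the heart of the argument in both versions.
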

\begin{proof}
	First, we compute a list of all small-span $\Z[\omega]$-graphs up to degree $6$.
	This is done by exhaustively growing from $1 \times 1$ small-span matrices $(a)$ where $a \in \Z[\omega]$ is a real element whose absolute value is less than $2.5$.
	We find by inspection that there are no small-span $6 \times 6$ $\Z[\omega]$-matrices that have a row with more than $4$ nonzero entries.
	Now suppose that there exists a small-span $n \times n$ $\Z[\omega]$-matrix with $n \geqslant 7$ and a row having more than $4$ nonzero entries.
	We can take a $6 \times 6$ principal submatrix $B$ that has a row with $5$ nonzero entries.
	By Corollary~\ref{cor:interlaceSpan}, $B$ is small-span.
	But $B$ is not on our computed list.
	Therefore, our supposition must be false.
\end{proof}

It suffices to grow up to size $5 \times 5$ for small-span $R$-matrices where $R \ne \Z[i], \Z[\omega]$ is a quadratic integer ring.

\begin{lemma}\label{lem:boundRowEntries2}
Let $R \ne \Z[\omega]$ be a quadratic integer ring and let $A$ be a small-span Hermitian $R$-matrix with all eigenvalues in the interval $[-2,2.5)$.
Suppose that $A$ is not equivalent to a $\Z$-matrix.
Then each row of $A$ has at most $3$ nonzero entries.
\end{lemma}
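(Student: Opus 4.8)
The plan is to argue by exhaustive computation, exactly as in the proof of Lemma~\ref{lem:boundRowEntries}, but exploiting the remark that for $R \ne \Z[\omega]$ it suffices to grow the lists only as far as degree $5$. For each relevant quadratic integer ring $R$, I would build the complete list of small-span $R$-graphs up to degree $5$ by the growing procedure described above: start from the $1 \times 1$ matrices $(a)$ with $a \in R$ and $|a| < 2.5$, repeatedly form all supergraphs obtained by adjoining one vertex, discard any whose spectrum leaves $[-2,2.5)$ (justified by Corollary~\ref{cor:interlaceSpan}), and prune by strong equivalence. By Lemma~\ref{lem:restrictEntries} the candidate entries are confined to a bounded region, and, as in the general set-up for these rings, only finitely many rings $R$ and, up to switching by units, only finitely many admissible edge-weights actually arise, so this is a finite computation.

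Having produced these lists, the crucial observation to verify by inspection is that no small-span $R$-graph on at most $5$ vertices possesses a vertex whose row has four or more nonzero entries; equivalently, no vertex has degree greater than $3$ (counting the charge of a charged vertex as contributing to its degree). This is the step where the hypothesis $R \ne \Z[\omega]$ is used: the ring $\Z[\omega]$, with its six units, does support small-span graphs carrying a degree-$4$ vertex, which is precisely why Lemma~\ref{lem:boundRowEntries} must be grown to degree $6$, whereas for every other quadratic integer ring such a configuration already fails the small-span test at degree $5$.

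To finish, suppose for contradiction that some small-span $R$-graph $G$ has a vertex $v$ with at least four nonzero entries in its row. If $v$ has at least four neighbours, retain $v$ together with four of them; if instead $v$ is charged with exactly three neighbours, retain $v$ together with those three. Deleting the remaining vertices yields, in either case, an induced subgraph $H$ on at most $5$ vertices in which the row of $v$ still has at least four nonzero entries. By Corollary~\ref{cor:interlaceSpan}, $H$ is again small-span, yet $H$ appears nowhere on the computed list, a contradiction. Therefore every row of $A$ has at most $3$ nonzero entries.

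The main obstacle is not conceptual but computational: one must be confident that the enumeration up to degree $5$ is genuinely exhaustive over all the relevant rings, which in turn requires pinning down the finite set of admissible (possibly irrational) edge-weights up to switching and ensuring that the equivalence-pruning never discards a needed graph. Once the lists are known to be correct, the interlacing reduction above is immediate.
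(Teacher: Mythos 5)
Your proposal is correct and takes essentially the same route as the paper: the paper's proof of this lemma is literally ``same as the proof of Lemma~\ref{lem:boundRowEntries}, with the list of $R$-graphs grown only up to degree $5$,'' i.e.\ exhaustive growing from the $1\times 1$ matrices, inspection that no $5$-vertex small-span $R$-graph has a row with more than $3$ nonzero entries, and the interlacing (Corollary~\ref{cor:interlaceSpan}) contradiction for larger graphs. Your explicit case split on whether the offending vertex is charged is in fact slightly more careful than the paper's own wording.
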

\begin{proof}
	Same as the proof of Lemma~\ref{lem:boundRowEntries}, where each quadratic integer ring $R \ne \Z[i]$ is considered separately and the list of $R$-matrices only goes up to size $5 \times 5$.
	Growing from each $2 \times 2$ small-span $R$-matrix that contains an irrational entry ensures that the resulting matrices are not equivalent to $\Z$-matrices.
	
	For the Gaussian integers $\Z[i]$, after growing up to matrices of size $5 \times 5$, one can continue growing from the set of matrices that have a row with $4$ nonzero entries.
	This computation terminates on matrices of size $12 \times 12$ and one observes that all the resulting small-span $\Z[i]$-matrices are equivalent to $\Z$-matrices.
\end{proof}

We can restrict our consideration to $d$ in the set $\left \{-11,-7,-3,-2,-1,2,3,5,6\right \}$.
For other $d$, there are no irrational elements of $\mathcal O_{\Q(\sqrt{d})}$ having (up to Galois conjugation) absolute value small enough to satisfy Lemma~\ref{lem:restrictEntries}.
Hence, for the $d$ not in this set, all small-span $\mathcal O_{\Q(\sqrt{d})}$-matrices are $\Z$-matrices, which have already been classified.

Now, the $1 \times 1$ small-span matrices have the form $(a)$ where $a \in \mathcal O_{\Q(\sqrt{d})}$ is real and $\abs{a} < 2.5$.
Moreover, for $d = 6$ and $n \geqslant 2$, all $n \times n$ small-span matrices are $\Z$-matrices.
Therefore, for $n \times n$ small-span $\mathcal O_{\Q(\sqrt{d})}$-matrices, where $n \geqslant 2$, we can restrict further to $d$ in the set $\{-11,-7,-3,-2,-1,2,3,5\}$.
An indecomposable small-span matrix is called \textbf{maximal} if it is \emph{not} equivalent to any proper submatrix of any indecomposable small-span matrix.
We have computed all $n \times n$ maximal indecomposable small-span $\mathcal O_{\Q(\sqrt{d})}$-matrices (for $2 \leqslant n \leqslant 8$) that are not equivalent to any $\Z$-matrix, and their numbers are tabulated in Table~\ref{tab:numMax} below.
See the author's thesis~\cite{Greaves:Thesis12} for a list of the actual matrices.

\begin{table}[h]
	\begin{center}
	\begin{tabular}{|c|c|c|c|c|c|c|c|c|}
		\hline
		\multirow{2}{*}{$n$} & \multicolumn{8}{c|}{$d$} \\ \cline{2-9}
		 & $-11$ & $-7$ & $-3$ & $-2$ & $-1$ & $2$ & $3$ & $5$ \\
		\hline
		$2$ & $2$ & $2$ & $2$ & $4$ & $2$ & $3$ & $2$ & $2$ \\
		$3$ & $0$ & $6$ & $3$ & $5$ & $8$ & $5$ & $0$ & $4$ \\
		$4$ & $0$ & $7$ & $10$ & $7$ & $16$ & $7$ & $0$ & $10$ \\
		$5$ & $0$ & $8$ & $9$ & $8$ & $10$ & $8$ & $0$ & $1$ \\
		$6$ & $0$ & $4$ & $14$ & $4$ & $6$ & $4$ & $0$ & $0$ \\
		$7$ & $0$ & $2$ & $2$ & $2$ & $3$ & $2$ & $0$ & $0$ \\
		$8$ & $0$ & $2$ & $3$ & $2$ & $3$ & $2$ & $0$ & $0$ \\
		\hline
	\end{tabular}
	\end{center}
	\caption{For each $d$ the number of maximal $n \times n$ small-span $\mathcal O_{\Q(\sqrt{d})}$-matrices that are not equivalent to a $\Z$-matrix.}
	\label{tab:numMax}
\end{table}

Observe from Table~\ref{tab:numMax} that for $d \in \left \{-11,3,5 \right \}$, each small-span $O_{\Q(\sqrt{d})}$-matrix having more than $5$ rows is equivalent to a $\Z$-matrix.
We summarise other useful implications from our computations in the following lemma.

\begin{lemma}\label{lem:SScomps}
	Let $G$ be small-span $O_{\Q(\sqrt{d})}$-graph on more than $6$ vertices.
	Then $G$ has the following properties:
	\begin{enumerate}
		\item Each edge-weight of $G$ has absolute value less than $2$;
		\item Each charge of $G$ has absolute value at most $1$;
		\item No charged vertex is incident to an edge with edge-weight having absolute value more than $1$;
		\item $G$ contains no triangles with fewer than $2$ charged vertices;
		\item If $G$ contains a triangle $T$ having exactly $2$ charged vertices then $T$ is equivalent to the triangle
		\begin{center}
			\begin{tikzpicture}
			\begin{scope}[scale=1, auto]	
				\foreach \pos/\name/\sign/\charge in {{(0.7,1)/a/zc/{}}, {(0,0)/b/pc/+}, {(1.4,0)/c/pc/+}}
					\node[\sign] (\name) at \pos {$\charge$}; 
				\foreach \edgetype/\source/ \dest /\weight in {pedge/a/b/{}, nedge/c/b/{}, pedge/a/c/{}}
				\path[\edgetype] (\source) -- node[weight] {$\weight$} (\dest);
			\end{scope}
			\end{tikzpicture}.
		\end{center}
		(See the following section for our graph drawing conventions.)
	\end{enumerate}
\end{lemma}

\section{Maximal small-span infinite families}
\label{sec:3}

Let $K$ be a quadratic number field.
We define a \textbf{template} $\mathcal T$ to be a $\C$-graph that is not equivalent to a $\Z$-graph and whose edge-weights are all determined except for some irrational edge-weights $\pm \alpha$ where $\alpha$ is determined only up to its absolute square $a = \alpha \bar \alpha$.
The pair $(\mathcal T, \mathcal O_K)$ is the set of all $\mathcal O_K$-graphs where $\alpha$ is substituted by some element $\rho \in \mathcal O_K$ where $\rho \bar \rho = a$.
We say an $\mathcal O_K$-graph $G$ \textbf{has template} $\mathcal T$ if $G$ is equivalent to some graph in $(\mathcal T, \mathcal O_K)$.
In a template, an edge of weight $1$ is drawn as \tikz[auto] {\path[pedge] (0,0) -- (1.2,0); } and for weight $-1$ we draw \tikz {\path[nedge] (0,0) -- (1.2,0); }.
Let $\alpha_1$ and $\alpha_2$ denote irrational complex numbers with $\abs{\alpha_1}^2 = 1$ and $\abs{\alpha_2}^2 = 2$ respectively.
We draw edges of weight $\alpha_1$ as \tikz[auto] {\path[wedge] (0,0) -- (1.2,0); } and $-\alpha_1$ as \tikz {\path[wnedge] (0,0) -- (1.2,0); }.
Similarly, we draw edges of weight $\pm \alpha_2$ as \tikz[auto] {\path[wwedge] (0,0) -- (1.2,0); } and \tikz {\path[wwnedge] (0,0) -- (1.2,0); }.
For our purposes, we will not need to draw any other types of edges.
A vertex with charge $1$ is drawn as \tikz {\node[pc] {$+$};} and a vertex with charge $-1$ is drawn as \tikz {\node[pc] {$-$};}.
And if a vertex is uncharged, we simply draw \tikz {\node[zc] {};}.
A template only makes sense as a graph over rings that have irrational elements having the required absolute squares. 
Using templates, we can simultaneously study small-span graphs over various quadratic integer rings.

In this section we study the two classes of infinite families of small-span matrices over quadratic integer rings.
Namely, these are the infinite families of templates $\mathcal P_n$ and $\mathcal Q_n$, as drawn below.
(The subscript corresponds to the number of vertices.)
	
\begin{center}	
	\begin{tikzpicture}
		\begin{scope}
			\foreach \type/\pos/\name in {{vertex/(0,0.5)/bgn}, {vertex/(1,0.5)/b1}, {vertex/(2,0.5)/e1}, {empty/(2.6,0.5)/b11}, {empty/(3.4,0.5)/c11}, {vertex/(4,0.5)/c1}, {vertex/(5,0.5)/d1}}
				\node[\type] (\name) at \pos {};
			\foreach \type/\pos/\name in {{pc/(6,0.5)/f1}}
				\node[\type] (\name) at \pos {$+$};
			\foreach \pos/\name in {{(3,0.5)/\dots}}
				\node at \pos {$\name$};
			\foreach \edgetype/\source/ \dest in {pedge/b1/e1, pedge/e1/b11, pedge/c11/c1, pedge/c1/d1,pedge/d1/f1}
				\path[\edgetype] (\source) -- (\dest);
			\foreach \edgetype/\source/\dest in {wwedge/b1/bgn}
				\path[\edgetype] (\source) -- node[weight] {} (\dest);
			\node at (3,-0.3) {$\mathcal P_n (n \geqslant 3)$};
			\node at (7,-0) {};
		\end{scope}
	\end{tikzpicture}
	\begin{tikzpicture}
		\newdimen\rad
		\rad=1cm
		\newdimen\radi
		\radi=1.04cm
		\draw (271.5:\radi) node[empty] {$\dots$};
		\foreach \y in {60,120,180,240,300}
		{
			\def\x{\y - 120}
			\draw (\x:\rad) node[vertex] {};
	    }
		\foreach \y in {60,120,240,300}
		{
			\def\x{\y - 120}
			\draw[pedge] (\x:\rad) arc (\x:\x+60:\rad);
	    }
		\draw[wedge] (60:\rad) arc (60:120:\rad);
		\draw (240:\rad) node[vertex] {};
		\draw[pedge] (240:\rad) arc (240:250:\rad);
		\draw[pedge] (290:\rad) arc (290:300:\rad);
		\node at (0,0.2) {$\mathcal Q_n$};
		\node at (0,-0.2) {$(n \geqslant 3)$};
	\end{tikzpicture}
\end{center}


Our classification of small-span $\mathcal O_{\Q(\sqrt{d})}$-matrices uses the classification of cyclotomic $\mathcal O_{\Q(\sqrt{d})}$-matrices.
Since small-span $\Z$-matrices have been classified, we can assume that $d$ is in the set $\left \{-11,-7,-3,-2,-1,2,3,5,6 \right \}$.
We can restrict $d$ further.
In the previous section we observed, from the computations, that on more than $5$ rows there are no small-span $\mathcal O_{\Q(\sqrt{d})}$-matrices when $d$ is $-11$, $3$, or $5$.
Hence, we need only consider $d$ from the set $\left \{-7,-3,-2,-1,2\right \}$.

Cyclotomic $\mathcal O_{\Q(\sqrt{d})}$-graphs have been classified for all $d$, with each classification being over a different set of rings.
In these classifications, the maximal cyclotomic graphs belonging to an infinite family that are not $\Z$-graphs have one of three templates given in Figures~\ref{fig:maxcycs1}, \ref{fig:maxcycs2}, and \ref{fig:maxcycs3}.
We call a template $\mathcal T$ cyclotomic (resp. small-span), if all the elements of $(\mathcal T, \mathcal O_{\Q(\sqrt{d})})$ are cyclotomic (resp. have small-span) for all $d$ that make sense with $\mathcal T$.
(E.g., the family of templates $\mathcal T_{2k}$ in Figure~\ref{fig:maxcycs1} only make sense with $\mathcal O_{\Q(\sqrt{d})}$ when $d=-1$ or $d = -3$.)
More generally, we say a template $\mathcal T$ has property $\mathfrak P$ if all $\mathcal O_{\Q(\sqrt{d})}$-graphs in $(\mathcal T, \mathcal O_{\Q(\sqrt{d})})$ have property $\mathfrak P$ for all appropriate $d$.
Warning: two graphs that have the same template do not necessarily have the same eigenvalues.

In Figures~\ref{fig:maxcycs1}, \ref{fig:maxcycs2}, and \ref{fig:maxcycs3}, we have three infinite families of cyclotomic templates $\mathcal T_{2k}$, $\mathcal C_{2k}$, and $\mathcal C_{2k+1}$.
The sets $(\mathcal T_{2k}, R)$, $(\mathcal C_{2k}, S)$, and $(\mathcal C_{2k+1}, S)$ are sets of maximal cyclotomic graphs where $R = \mathcal O_{\Q(\sqrt{d})}$ for $d = -1$ or $d = -3$, and $S = \mathcal O_{\Q(\sqrt{d})}$ for $d \in \left \{-7,-2,-1,2\right \}$.

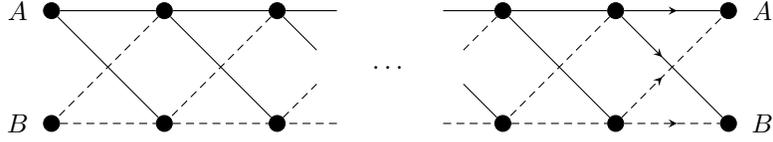
\begin{figure}[htbp]
	\centering
		\begin{tikzpicture}[auto, scale=1.5]
			\begin{scope}
				\foreach \type/\pos/\name in {{vertex/(0,0)/a2}, {vertex/(0,1)/a1}, {vertex/(1,1)/b1}, {vertex/(1,0)/b2}, {vertex/(2,0)/e2}, {vertex/(2,1)/e1}, {empty/(2.6,1)/b11}, {empty/(2.6,0)/b21}, {empty/(2.4,0.6)/b12}, {empty/(2.4,0.4)/b22}, {empty/(3.4,1)/c11}, {empty/(3.4,0)/c21}, {empty/(3.6,0.6)/c12}, {empty/(3.6,0.4)/c22}, {vertex/(4,1)/c1}, {vertex/(4,0)/c2}, {vertex/(5,1)/d1}, {vertex/(5,0)/d2}, {vertex/(6,1)/f1}, {vertex/(6,0)/f2}}
					\node[\type] (\name) at \pos {};
				\foreach \pos/\name in {{(3,0.5)/\dots}, {(-.3,1)/A}, {(-0.3,0)/B}, {(6.3,1)/A}, {(6.3,0)/B}}
					\node at \pos {$\name$};
				\foreach \edgetype/\source/ \dest/\weight in {nedge/b1/a2/{}, pedge/a1/b1/{}, pedge/a1/b2/{}, nedge/a2/b2/{}, nedge/e1/b2/{}, pedge/b1/e1/{}, pedge/b1/e2/{}, nedge/b2/e2/{}, nedge/b21/e2/{}, pedge/e1/b11/{}, pedge/e1/b12/{}, nedge/e2/b22/{}, pedge/c11/c1/{}, nedge/c12/c1/{}, pedge/c22/c2/{}, nedge/c21/c2/{}, nedge/d1/c2/{}, pedge/c1/d1/{}, pedge/c1/d2/{}, nedge/c2/d2/{}, wedge/d1/f1/{}, wnedge/d2/f2/{}}
					\path[\edgetype] (\source) -- node[weight] {$\weight$} (\dest);
			\end{scope}
			\begin{scope}[decoration={markings,mark = at position 0.4 with {\arrow{stealth} } }]
				\foreach \edgetype/\source/ \dest/\weight in {dnedge/d2/f1/{}, dpedge/d1/f2/{}}
					\path[\edgetype] (\source) -- node[weight] {$\weight$} (\dest);
			\end{scope}
		\end{tikzpicture}
	\caption{The infinite family $\mathcal T_{2k}$ of $2k$-vertex maximal connected cyclotomic templates. (The two copies of vertices $A$ and $B$ should each be identified to give a toral tessellation.) }
	\label{fig:maxcycs1}
\end{figure}

\begin{figure}[htbp]
	\centering
		\begin{tikzpicture}[scale=1.5, auto]
			\begin{scope}
				\foreach \type/\pos/\name in {{vertex/(1,1)/b1}, {vertex/(1,0)/b2}, {vertex/(2,0)/e2}, {vertex/(2,1)/e1}, {empty/(2.6,1)/b11}, {empty/(2.6,0)/b21}, {empty/(2.4,0.6)/b12}, {empty/(2.4,0.4)/b22}, {empty/(3.4,1)/c11}, {empty/(3.4,0)/c21}, {empty/(3.6,0.6)/c12}, {empty/(3.6,0.4)/c22}, {vertex/(4,1)/c1}, {vertex/(4,0)/c2}, {vertex/(5,1)/d1}, {vertex/(5,0)/d2}}
					\node[\type] (\name) at \pos {};
				\foreach \type/\pos/\name in {{vertex/(0,0.5)/bgn}, {vertex/(6,0.5)/end}}
					\node[\type] (\name) at \pos {};
				\foreach \pos/\name in {{(3,0.5)/\dots}}
					\node at \pos {\name};
				\foreach \edgetype/\source/ \dest in {nedge/e1/b2, pedge/b1/e1, pedge/b1/e2, nedge/b2/e2, nedge/b21/e2, pedge/e1/b11, pedge/e1/b12, nedge/e2/b22, pedge/c11/c1, nedge/c12/c1, pedge/c22/c2, nedge/c21/c2, nedge/d1/c2, pedge/c1/d1, pedge/c1/d2, nedge/c2/d2}
					\path[\edgetype] (\source) -- (\dest);
				\foreach \edgetype/\source/\dest in {wwedge/b1/bgn, wwedge/b2/bgn, wwnedge/d2/end, wwedge/d1/end}
					\path[\edgetype] (\source) -- (\dest);
			\end{scope}
		\end{tikzpicture}
	\caption{The infinite family of $2k$-vertex maximal connected cyclotomic templates $\mathcal C_{2k}$ for $k \geqslant 2$.}
	\label{fig:maxcycs2}
\end{figure}

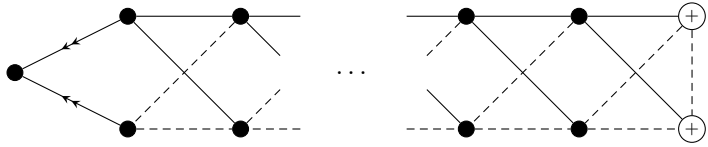
\begin{figure}[htbp]
	\centering
		\begin{tikzpicture}[scale=1.5, auto]
			\begin{scope}
				\foreach \type/\pos/\name in {{vertex/(0,0.5)/bgn}, {vertex/(1,1)/b1}, {vertex/(1,0)/b2}, {vertex/(2,0)/e2}, {vertex/(2,1)/e1}, {empty/(2.6,1)/b11}, {empty/(2.6,0)/b21}, {empty/(2.4,0.6)/b12}, {empty/(2.4,0.4)/b22}, {empty/(3.4,1)/c11}, {empty/(3.4,0)/c21}, {empty/(3.6,0.6)/c12}, {empty/(3.6,0.4)/c22}, {vertex/(4,1)/c1}, {vertex/(4,0)/c2}, {vertex/(5,1)/d1}, {vertex/(5,0)/d2}}
					\node[\type] (\name) at \pos {};
				\foreach \type/\pos/\name in {{pc/(6,1)/f1}, {pc/(6,0)/f2}}
					\node[\type] (\name) at \pos {$+$};
				\foreach \pos/\name in {{(3,0.5)/\dots}}
					\node at \pos {$\name$};
				\foreach \edgetype/\source/ \dest in {nedge/e1/b2, pedge/b1/e1, pedge/b1/e2, nedge/b2/e2, nedge/b21/e2, pedge/e1/b11, pedge/e1/b12, nedge/e2/b22, pedge/c11/c1, nedge/c12/c1, pedge/c22/c2, nedge/c21/c2, nedge/d1/c2, pedge/c1/d1, pedge/c1/d2, nedge/c2/d2, nedge/f1/d2, pedge/d1/f1, pedge/d1/f2, nedge/d2/f2, nedge/f1/f2}
					\path[\edgetype] (\source) -- (\dest);
				\foreach \edgetype/\source/\dest in {wwedge/b1/bgn, wwedge/b2/bgn}
					\path[\edgetype] (\source) -- (\dest);
			\end{scope}
		\end{tikzpicture}
	\caption{The infinite family of $(2k+1)$-vertex maximal connected cyclotomic templates $\mathcal C_{2k+1}$ for $k \geqslant 1$.}
	\label{fig:maxcycs3}
\end{figure}

All of the eigenvalues of the maximal connected cyclotomic graphs, in the sets $(\mathcal T_{2k}, R)$, $(\mathcal C_{2k}, S)$, and $(\mathcal C_{2k+1}, S)$, are equal to $\pm 2$ with at least one pair of eigenvalues having opposite signs.
Hence each of these graphs has span equal to $4$.
We want to find small-span graphs contained inside these maximal connected cyclotomic templates.
We look for subgraphs that have span equal to $4$.
This way, we can recognise that a graph is not small-span if it contains one of these as a subgraph.

Since its spectrum is $\{-2^{(1)},0^{(2)},2^{(1)}\}$, we have that the $\Z$-graph $X_4$ in Figure~\ref{fig:x4} has span equal to $4$.

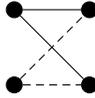
\begin{figure}[htbp]
	\centering
		\begin{tikzpicture}
			\begin{scope}[yshift=-4.2cm, xshift=1.5cm]
				\foreach \type/\pos/\name in {{vertex/(1,1)/b1}, {vertex/(1,0)/b2}, {vertex/(2,0)/e2}, {vertex/(2,1)/e1}}
					\node[\type] (\name) at \pos {};
				\foreach \edgetype/\source/ \dest in {pedge/b1/e1, pedge/e2/b1, nedge/b2/e1, nedge/b2/e2}
					\path[\edgetype] (\source) -- (\dest);
			\end{scope}
		\end{tikzpicture}
	\caption{The $\Z$-graph $X_4$.}
	\label{fig:x4}
\end{figure}

In Figure~\ref{fig:span4graphs} we list $4$ infinite families of cyclotomic sub-templates having span equal to $4$.

\begin{figure}[htbp]
	\centering
	\begin{tikzpicture}
	\begin{scope}[yshift=2cm]
		\foreach \type/\pos/\name in {{vertex/(0,0.5)/bgn}, {vertex/(1,0.5)/b1}, {vertex/(2,0.5)/e1}, {empty/(2.6,0.5)/b11}, {empty/(3.4,0.5)/c11}, {vertex/(4,0.5)/c1}, {vertex/(5,0.5)/d1}, {vertex/(6,0.5)/end}}
			\node[\type] (\name) at \pos {};
		\foreach \pos/\name in {{(0,0.1)/\bar \alpha_2}, {(1,0.1)/2}, {(2,0.1)/2}, {(4,0.1)/2}, {(5,0.1)/2}, {(6,0.1)/\bar \alpha_2}}
			\node at \pos {\small $\name$};
		\foreach \pos/\name in {{(3,0.5)/\dots}}
			\node at \pos {$\name$};
		\foreach \edgetype/\source/ \dest in {pedge/b1/e1, pedge/e1/b11, pedge/c11/c1, pedge/c1/d1}
			\path[\edgetype] (\source) -- (\dest);
		\foreach \edgetype/\source/\dest in {wwedge/b1/bgn, wwedge/d1/end}
			\path[\edgetype] (\source) -- node[weight] {} (\dest);
		\node at (3,-0.6) {$\mathcal X^{(1)}_n (n \geqslant 3)$};
	\end{scope}
	\end{tikzpicture}
	
		\begin{tikzpicture}[scale=1, auto]
			\begin{scope}[yshift=0.1cm]
				\foreach \type/\pos/\name in {{vertex/(0,0.5)/bgn}, {vertex/(1,0.5)/b1}, {vertex/(2,0.5)/e1}, {empty/(2.6,0.5)/b11}, {empty/(3.4,0.5)/c11}, {vertex/(4,0.5)/c1}, {vertex/(5,0.5)/d1}}
					\node[\type] (\name) at \pos {};
				\foreach \type/\pos/\name in {{pc/(6,1)/f1}, {pc/(6,0)/f2}}
					\node[\type] (\name) at \pos {$+$};
				\foreach \pos/\name in {{(0,0.1)/e_n}, {(1,0.1)/e_{n-1}}, {(2,0.1)/e_{n-2}}, {(4,0.1)/e_4}, {(5,0.1)/e_3}, {(6.5,-0)/e_2}, {(6.5,1)/e_1}}
					\node at \pos {\small $\name$};
				\foreach \pos/\name in {{(3,0.5)/\dots}}
					\node at \pos {$\name$};
				\foreach \edgetype/\source/ \dest in {pedge/b1/e1, pedge/e1/b11, pedge/c11/c1, pedge/c1/d1,pedge/d1/f1, pedge/d1/f2, nedge/f1/f2}
					\path[\edgetype] (\source) -- (\dest);
				\foreach \edgetype/\source/\dest in {wwedge/b1/bgn}
					\path[\edgetype] (\source) -- node[weight] {} (\dest);
				\node at (3,-0.6) {$\mathcal X^{(2)}_n (n \geqslant 4)$};
			\end{scope}
			\end{tikzpicture}
			\begin{tikzpicture}
			\begin{scope}[yshift=-2cm,xshift=0.3cm]
				\foreach \type/\pos/\name in {{vertex/(0,0.5)/bgn}, {vertex/(1,0.5)/b1}, {vertex/(2,0.5)/e1}, {empty/(2.6,0.5)/b11}, {empty/(3.4,0.5)/c11}, {vertex/(4,0.5)/c1}, {vertex/(5,0.5)/d1}, {vertex/(5.7,1)/f1}, {vertex/(5.7,0)/f2}}
					\node[\type] (\name) at \pos {};
				\foreach \pos/\name in {{(0,0.1)/\bar \alpha_2}, {(1,0.1)/2}, {(2,0.1)/2}, {(4,0.1)/2}, {(5,0.1)/2}, {(5.7,-0.3)/1}, {(5.7,1.3)/1}}
					\node at \pos {$\name$};
				\foreach \pos/\name in {{(3,0.5)/\dots}}
					\node at \pos {$\name$};
				\foreach \edgetype/\source/ \dest in {pedge/b1/e1, pedge/e1/b11, pedge/c11/c1, pedge/c1/d1, pedge/d1/f1, pedge/d1/f2}
					\path[\edgetype] (\source) -- (\dest);
				\foreach \edgetype/\source/\dest in {wwedge/b1/bgn}
					\path[\edgetype] (\source) -- node[weight] {} (\dest);
				\node at (3,-0.6) {$\mathcal X^{(3)}_n (n \geqslant 4)$};
			\end{scope}
			\end{tikzpicture}
			\begin{tikzpicture}
			\begin{scope}[yshift=-3.8cm]
				\foreach \type/\pos/\name in {{vertex/(0,0.5)/bgn}, {vertex/(1,0.5)/b1}, {vertex/(2,0.5)/e1}, {empty/(2.6,0.5)/b11}, {empty/(3.4,0.5)/c11}, {vertex/(4,0.5)/c1}, {vertex/(5,0.5)/d1}, {vertex/(5.7,1)/f1}, {vertex/(5.7,0)/f2}, {vertex/(6.4,0.5)/g1}, {vertex/(7.4,0.5)/h1}, {empty/(8,0.5)/i11}, {empty/(8.8,0.5)/j11}, {vertex/(9.4,0.5)/k1}, {vertex/(10.4,0.5)/l1}, {vertex/(11.4,0.5)/end}}
					\node[\type] (\name) at \pos {};
				\foreach \pos/\name in {{(-0.5,0.7)/A}, {(0,0.1)/l_s}, {(1,0.1)/l_{s-1}}, {(2,0.1)/l_{s-2}}, {(4,0.1)/l_{2}}, {(5,0.1)/l_1}, {(5.7,-0.3)/r_{0}}, {(5.7,1.3)/l_{0}}, {(6.4,0.1)/r_1}, {(7.4,0.1)/r_{2}}, {(9.4,0.1)/r_{t-2}}, {(10.4,0.1)/r_{t-1}}, {(11.4,0.1)/r_t = l_s}, {(11.9,0.7)/A}}
					\node at \pos {$\name$};
				\foreach \pos/\name in {{(3,0.5)/\dots}, {(8.4,0.5)/\dots}}
					\node at \pos {$\name$};
				\foreach \edgetype/\source/ \dest in {pedge/b1/e1, pedge/e1/b11, pedge/c11/c1, pedge/c1/d1, pedge/d1/f1, pedge/d1/f2, nedge/g1/f2, pedge/f1/g1, pedge/g1/h1, pedge/h1/i11, pedge/j11/k1, pedge/k1/l1, pedge/l1/end}
					\path[\edgetype] (\source) -- (\dest);
				\foreach \edgetype/\source/\dest in {wedge/b1/bgn}
					\path[\edgetype] (\source) -- node[weight] {} (\dest);
				\node at (5.7,-0.8) {$\mathcal X^{(4)}_{s,t} (s,t \geqslant 2)$};
			\end{scope}
		\end{tikzpicture}
	\caption{Cyclotomic templates having span equal to $4$. In the first three templates, the subscript denotes the number of vertices. The last template has $s+t+1$ vertices and the two copies of the vertex $A$ should be identified.}
	\label{fig:span4graphs}
\end{figure}
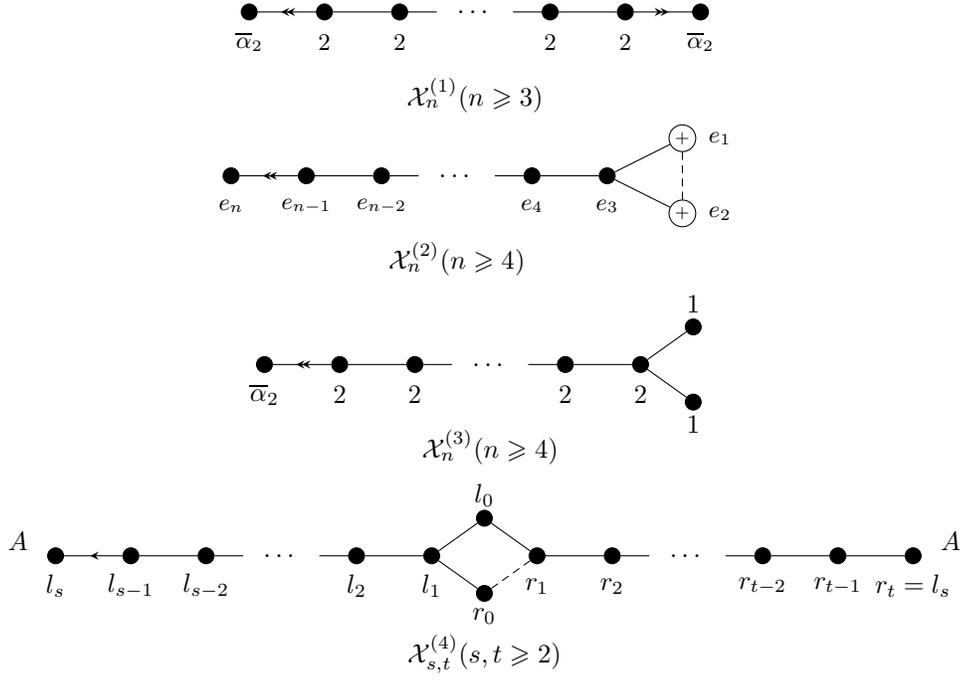

We can show that the sub-templates $\mathcal X^{(1)}_n$, $\mathcal X^{(2)}_n$, $\mathcal X^{(3)}_n$, and $\mathcal X^{(4)}_{s,t}$ in Figure~\ref{fig:span4graphs} have span equal to $4$ by simply giving the eigenvectors corresponding to the eigenvalues $\pm 2$.
The numbers beneath the vertices of $\mathcal X^{(1)}_n$ and $\mathcal X^{(3)}_n$ in Figure~\ref{fig:span4graphs} correspond to entries of an eigenvector having associated eigenvalue $2$ and since these two templates are bipartite, they also have as an eigenvalue $-2$.
The entries of the eigenvectors of $\mathcal X^{(2)}_n$ are represented by the $e_j$; the eigenvector associated to $2$ has $e_1 = 1$, $e_2 = -1$, and $e_j = 0$ for $j \in \left \{3,\dots, n\right \}$.
For the eigenvalue $-2$ we have $e_1 = e_2 = 1$, $e_j = (-1)^j \cdot 2$ for $j \in \left \{3,\dots, n-1\right \}$, and $e_n = (-1)^n \cdot \bar \alpha_2$.
For the graph $\mathcal X^{(4)}_{s,t}$, we are either working over the Gaussian integers $\Z[i]$ or the Eisenstein integers $\Z[\omega]$.
For the eigenvalue
\begin{itemize}
	\item[$-2$:] 
	for $j \in \left \{1,\dots,s-1\right \}$, set
	\[
		l_j = (-1)^{s-j}(1+\alpha_1), \quad  r_j = (-1)^{t-j}(1 + \bar\alpha_1), \quad \text{ and } \quad l_s = r_t = 1 + \bar \alpha_1.
	\]
	If $s+t$ is even, set
	\[
		l_0 = (-1)^s\left ( 1 + \frac{\alpha_1 + \bar \alpha_1}{2} \right ) \quad \text{ and } \quad r_0 = (-1)^s \frac{\alpha_1 - \bar \alpha_1}{2}.
	\]
	Otherwise set
	\[
		l_0 = (-1)^s \frac{\alpha_1 - \bar \alpha_1}{2} \quad \text{ and } \quad r_0 = (-1)^s\left ( 1 + \frac{\alpha_1 + \bar \alpha_1}{2} \right ).
	\]
	
	\item[$2$:] 
	set
	\[
		l_1 = \dots = l_{s-1} = 1+\alpha_1, \; r_1 = \dots = r_{t} = l_s = 1 + \bar \alpha_1,
	\]
	and
	\[
		l_0 = 1 + \frac{\alpha_1 + \bar \alpha_1}{2} \text{ and } r_0 = \frac{\alpha_1 - \bar \alpha_1}{2}.
	\]
\end{itemize}

In the next two lemmata we show that any small-span sub-template of $\mathcal C_{2k}$, $\mathcal C_{2k+1}$, or $\mathcal T_{2k}$ is a sub-template of either $\mathcal P_n$ or $\mathcal Q_n$.

\begin{lemma}\label{lem:cycPn}
	The template $\mathcal P_n$ is small-span for all $n \geqslant 3$.
	Any connected small-span sub-template of either $\mathcal C_{2k}$ or $\mathcal C_{2k+1}$ is contained in $\mathcal P_n$ for some $n$.
\end{lemma}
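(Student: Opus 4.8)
The statement has two parts. First I would establish that $\mathcal P_n$ has small span for every $n \geqslant 3$; second I would show that any connected small-span sub-template of $\mathcal C_{2k}$ or $\mathcal C_{2k+1}$ embeds into some $\mathcal P_n$. I will treat these in turn, since the first is a direct spectral computation and the second is a structural/combinatorial argument that can use the span-$4$ obstruction templates of Figure~\ref{fig:span4graphs}.

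For the first part, recall that $\mathcal P_n$ is a path on $n$ vertices with one charged endpoint (charge $+1$), interior edges of weight $1$, and a single terminal edge of weight $\pm\alpha_2$ (absolute square $2$). The plan is to compute its characteristic polynomial explicitly by expanding along the charged endpoint, obtaining a recurrence in $n$ whose solution I can relate to Chebyshev-type polynomials after the standard substitution $x = 2\cos\theta$; the weight-$\alpha_2$ edge contributes a factor of its absolute square $2$ in the cofactor expansion, so the polynomial depends only on $a = \alpha_2\bar\alpha_2 = 2$ and not on the ambient ring. I then bound all roots inside $[-2,2.5)$, either by exhibiting the interlacing pattern against the uncharged sub-path (whose eigenvalues are the known values $2\cos(k\pi/(n+1))$ lying in $(-2,2)$) or by directly locating the largest root just below $2.5$. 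Since the span is the difference of extreme roots and the polynomial is ring-independent, showing span $<4$ here is a finite, uniform estimate.

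For the second part, I would argue by contradiction through the span-$4$ obstructions. Suppose $H$ is a connected small-span sub-template of $\mathcal C_{2k}$ or $\mathcal C_{2k+1}$. By Corollary~\ref{cor:interlaceSpan}, $H$ cannot contain any of $\mathcal X^{(1)}_n$, $\mathcal X^{(2)}_n$, $\mathcal X^{(3)}_n$, $\mathcal X^{(4)}_{s,t}$, or $X^{(5)}_4$ as a subgraph, since each of those has span exactly $4$. I would then inspect the structure of $\mathcal C_{2k}$ and $\mathcal C_{2k+1}$ (two parallel weighted-$\pm\alpha_2$ ``rungs'' joining a ladder-like interior that terminates in either a pair of weight-$\alpha_2$ edges to a single vertex or in a charged pair) and show that avoiding all these forbidden subgraphs forces $H$ to be a single path: the presence of any ladder rung would create an $\mathcal X^{(4)}$ or $X^{(5)}$ configuration, retaining both weight-$\alpha_2$ edges at one end would give an $\mathcal X^{(1)}$ or $\mathcal X^{(3)}$, and keeping the charged terminal pair would give an $\mathcal X^{(2)}$. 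What survives is a path with at most one terminal weight-$\alpha_2$ edge and at most one charge, which is exactly the shape of a sub-template of $\mathcal P_n$.

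The main obstacle I expect is the second part's case analysis: the templates $\mathcal C_{2k}$ and $\mathcal C_{2k+1}$ have a two-row ``ladder'' interior with signed edges, so I must verify carefully that \emph{every} way of deleting vertices that keeps $H$ connected and span less than $4$ collapses the two rows into one path, and that the interface between the terminal decorations and the interior does not hide an additional admissible configuration. The cleanest route is to enumerate which edges at each end can survive given the forbidden subgraphs, using the fact (from Lemma~\ref{lem:SScomps}) that on many vertices the charges and heavy edge-weights are tightly constrained, and then to check that the residual connected graphs are precisely the sub-templates of $\mathcal P_n$. The first part, by contrast, is routine once the recurrence is set up.
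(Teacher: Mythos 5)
Your second part is essentially the paper's argument: the paper likewise excludes the span-$4$ templates $\mathcal X^{(1)}_k$, $\mathcal X^{(2)}_k$, $\mathcal X^{(3)}_k$ and the $\Z$-graph $X^{(5)}_4$ via Corollary~\ref{cor:interlaceSpan}, and concludes that every connected small-span sub-template of $\mathcal C_{2k}$ or $\mathcal C_{2k+1}$ lies inside some $\mathcal P_n$; your case analysis (a surviving rung gives $X^{(5)}_4$, two surviving $\alpha_2$-edges at an end give $\mathcal X^{(1)}_3$, the surviving charged pair gives an $\mathcal X^{(2)}_m$) fills in detail that the paper leaves implicit. Two minor caveats: $\mathcal X^{(4)}_{s,t}$ is not needed here, as it is the obstruction for $\mathcal T_{2k}$ used in Lemma~\ref{lem:cycQn}; and Lemma~\ref{lem:SScomps} only applies to graphs on more than $6$ vertices, so it cannot be invoked for small sub-templates, though your argument does not really need it.

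The genuine gap is in your first part, in both of the root-location routes you offer. First, interlacing against the uncharged sub-path runs in the wrong direction: Lemma~\ref{lem:interlacing} gives $\lambda_1 \leqslant \mu_1$ and $\mu_{n-1} \leqslant \lambda_n$, so knowing that the $\mu_i$ lie in $(-2,2)$ yields no lower bound on $\lambda_1$ and no upper bound on $\lambda_n$ --- precisely the two eigenvalues that determine the span. Bounding a whole spectrum by interlacing requires embedding the graph in a \emph{supergraph} of known spectrum, not comparing with a subgraph. Second, your stated target, all roots in $[-2,2.5)$, is too weak: that interval has length $4.5$, so it does not imply span less than $4$; and your guess about the largest root is off, since the largest eigenvalue of $\mathcal P_n$ is exactly $2$ (for example $\chi_{\mathcal P_3}(x) = (x-2)(x^2+x-1)$). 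The paper's proof repairs exactly this point: $\mathcal P_n$ is a sub-template of the maximal cyclotomic template $\mathcal C_{2n-1}$, so interlacing \emph{upward} places all eigenvalues in $[-2,2]$, and span $< 4$ then follows once $-2$ is excluded as an eigenvalue, which the paper does by reducing $A + 2I$ to upper triangular form and computing $\det(A+2I) = 4 \neq 0$. Alternatively, your Chebyshev recurrence can be pushed to completion: with $x = 2\cos\theta$ one finds $\chi_{\mathcal P_n}(2\cos\theta) = 2\bigl(\cos(n\theta) - \cos((n-1)\theta)\bigr)$, so the eigenvalues are $2\cos\bigl(2\pi k/(2n-1)\bigr)$ for $k = 0,\dots,n-1$, giving span $2 - 2\cos\bigl(2\pi(n-1)/(2n-1)\bigr) < 4$; but it is this explicit computation, not either of your two stated verification criteria, that closes the argument.
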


Recall that a template must have at least one irrational edge-weight.

\begin{proof} 
	The template $\mathcal P_n$ is a sub-template of the cyclotomic template $\mathcal C_{2n-1}$ and so, by interlacing, the eigenvalues of $\mathcal P_n$ lie in the interval $[-2,2]$. 	
	Let $A$ be an adjacency matrix of $\mathcal P_n$. 
	Clearly, it suffices to show that $\mathcal P_n$ does not have $-2$ as an eigenvalue, i.e., the rows of $A + 2I$ are linearly independent.
	Using Gaussian elimination, put $A+2I$ into the following upper triangular form.
	\[
		\begin{pmatrix}
			3 & 1 & & \\
			 & 5/3 & 1 & \\
			& & 7/5 & 1 \\
			& & & \ddots & \ddots & \\
			& & & & \frac{2(n - 2) + 1}{2(n-2) - 1} & 1 \\
			& & & & & \frac{2(n - 1) + 1}{2(n-1) - 1} & \alpha_2 \\
			& & & & & & 2\big(1 - \frac{2(n - 1) - 1}{2(n-1) + 1}\big) \\
		\end{pmatrix}
	\]
	It is then easy to see that the determinant is $4$. 
	Therefore $A+2I$ is non-singular and so $A$ does not have $-2$ as an eigenvalue.

	By Corollary~\ref{cor:interlaceSpan}, any subgraph of a graph having either $\mathcal X^{(1)}_k$, $\mathcal X^{(2)}_k$, or $\mathcal X^{(3)}_k$ as a template cannot occur as a subgraph of a small-span graph.
	Similarly, $X_4$ cannot be a subgraph of a small-span graph.
	Hence all connected small-span sub-templates of $\mathcal C_{2k}$ and $\mathcal C_{2k+1}$ are sub-templates of $\mathcal P_n$ for some $n$.
\end{proof}

\begin{lemma}\label{lem:cycQn}
	The template $\mathcal Q_n$ is small-span for all $n \geqslant 3$.
	Any connected small-span sub-template of $\mathcal T_{2k}$ is contained in $\mathcal Q_n$ for some $n$.
\end{lemma}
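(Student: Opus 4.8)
The plan is to follow the proof of Lemma~\ref{lem:cycPn} closely, treating $\mathcal Q_n$ as the cyclic analogue of $\mathcal P_n$. For the first assertion I would begin by recording that $\mathcal Q_n$ is a sub-template of the maximal cyclotomic template $\mathcal T_{2k}$ of Figure~\ref{fig:maxcycs1} for a suitable $k$; by Corollary~\ref{cor:interlaceSpan} this already forces every eigenvalue of $\mathcal Q_n$ into $[-2,2]$, so its span is at most $4$. It then suffices to show the span is strictly less than $4$, and since the eigenvalues lie in $[-2,2]$ this amounts to showing that $+2$ and $-2$ are not both eigenvalues. As in Lemma~\ref{lem:cycPn} I would do this by ruling out $-2$, i.e.\ by showing that $A+2I$ is nonsingular, where $A$ is an adjacency matrix of $\mathcal Q_n$.

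The computation of $\det(A+2I)$ is where the cyclic nature of $\mathcal Q_n$ makes the argument genuinely differ from the path case: because $\mathcal Q_n$ is a cycle, $A+2I$ is a \emph{periodic} tridiagonal matrix, and the wrap-around entries coming from the single irrational edge-weight $\alpha_1$ obstruct the clean upper-triangular reduction used for $\mathcal P_n$. To get around this I would use that every edge-weight of $\mathcal Q_n$ has absolute value $1$, so that the cycle already carries weights $1,\dots,1,\alpha_1$, with the full holonomy concentrated on one edge. I would then analyse the kernel of $A+2I$ directly: on the unweighted part of the cycle the eigenvector equation is the two-term recurrence $v_{k-1}+2v_k+v_{k+1}=0$, whose solutions are $v_k=(-1)^k(a+bk)$; substituting this into the two boundary equations across the twisted edge yields a $2\times 2$ linear system in $(a,b)$ whose determinant vanishes only when $\alpha_1=\pm 1$. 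Since $\alpha_1$ is irrational, only the trivial solution survives, so $-2$ is not an eigenvalue and $\mathcal Q_n$ has small-span.

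For the second assertion I would mirror the second half of Lemma~\ref{lem:cycPn}. The families $\mathcal X^{(1)}_k$, $\mathcal X^{(2)}_k$, $\mathcal X^{(3)}_k$, $\mathcal X^{(4)}_{s,t}$ of Figure~\ref{fig:span4graphs}, together with $X^{(5)}_4$ of Figure~\ref{fig:x4}, all have span $4$, so by Corollary~\ref{cor:interlaceSpan} none of them can be a subgraph of a small-span graph; in particular $\mathcal X^{(4)}_{s,t}$, which is the span-$4$ configuration adapted to the toral tessellation $\mathcal T_{2k}$, is forbidden. The remaining work is combinatorial: I would argue that a connected sub-template of $\mathcal T_{2k}$ avoiding all these forbidden configurations can contain none of the rungs or branch vertices of the toral ladder, and hence must collapse to a single cycle carrying exactly one irrational edge-weight, that is, it is contained in $\mathcal Q_n$ for some $n$.

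The genuinely non-routine parts, and the places I expect the main difficulty, are (i) the kernel computation for the cyclic matrix $A+2I$, where the holonomy of the twisted edge must be tracked carefully so as to see that nonsingularity is equivalent to $\alpha_1\ne\pm 1$, and (ii) the case analysis in the second assertion showing that forbidding $\mathcal X^{(4)}_{s,t}$ (and the other span-$4$ templates) inside the two-dimensional structure $\mathcal T_{2k}$ really does force every connected small-span sub-template down to the one-dimensional cyclic shape $\mathcal Q_n$.
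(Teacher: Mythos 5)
Your treatment of the first assertion is essentially correct and, despite the different packaging, amounts to the paper's own computation. The paper triangularizes $A+2I$ and reads off the determinant $2-(-1)^n(\alpha_1+\bar\alpha_1)$; your route (solve the recurrence $v_{k-1}+2v_k+v_{k+1}=0$ on the untwisted part, getting $v_k=(-1)^k(a+bk)$, then impose the two equations across the twisted edge) produces a $2\times 2$ system whose determinant is $2(-1)^{n-1}+\alpha_1+\bar\alpha_1$, the same quantity up to sign. It vanishes only if $\operatorname{Re}\alpha_1=\pm 1$, i.e.\ only if $\alpha_1=\pm1$, which is excluded because the edge-weight in a template is irrational. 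So part one is fine.

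The second assertion is where your proposal has a genuine gap: forbidding the span-$4$ configurations is \emph{not} sufficient to force a connected subgraph of $\mathcal T_{2k}$ down to a cycle. Concretely, for $k\geqslant 4$ take both vertices of one column of $\mathcal T_{2k}$ together with one vertex from each of the two following columns (no wrap-around). The induced subgraph is a star $K_{1,3}$ with edge-weights of absolute value $1$: it contains no cycle at all, hence none of $\mathcal X^{(4)}_{s,t}$, $X^{(5)}_4$, or the other span-$4$ templates; it is small-span (eigenvalues $\pm\sqrt3,0,0$); and it is not contained in any $\mathcal Q_n$, since it has a vertex of degree $3$. What excludes it from the lemma is not span but the template property: being a tree with unit-modulus weights, it is switching-equivalent to a $\Z$-graph, so it is not a sub-template. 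The paper's proof supplies exactly the missing ingredient that handles all such cases at once: any subgraph of $\mathcal T_{2k}$ obtained by deleting two vertices with the same neighbourhood --- that is, a whole column --- is equivalent to a $\Z$-graph (cutting the torus open kills the irrational holonomy), hence is not a template. Consequently a connected sub-template must meet every column; it therefore wraps around the torus, and if it contained both vertices of some column it would contain an $\mathcal X^{(4)}_{s,t}$, contradicting small span. So it has exactly one vertex per column and is contained in $\mathcal Q_n$. Note also that your structural description of the obstruction is off: $\mathcal T_{2k}$ has no ``rungs'' (there are no edges inside a column) and no distinguished ``branch vertices'' (every vertex has degree $4$), so the exclusion cannot be phrased vertex-by-vertex; it has to go through the not-equivalent-to-a-$\Z$-graph argument above.
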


\begin{proof}
	As with the previous lemma, since $\mathcal Q_n$ is contained in $\mathcal T_{2n}$, it suffices to show that $\mathcal Q_n$ does not have $-2$ as an eigenvalue.
	Let $A$ be an adjacency matrix of $\mathcal Q_n$.
	Using Gaussian elimination, put $A+2I$ into the following upper triangular form.
	\[
		\begin{pmatrix}
			2 & 1 &  &  & &  & 1 \\
			 & 3/2 & 1 & & & & -1/2 \\
			 &  & 4/3 & 1 & & & 1/3  \\
			 & &  & \ddots & \ddots & &  \vdots \\
			& & &  & \frac{n-1}{n-2} & 1 & \frac{(-1)^{n-1}}{n-2} \\
			& & & &  & \frac{n}{n-1} & \alpha_1 + \frac{(-1)^n}{n-1} \\
			& & &  & &  & 1 + S(n) \\
		\end{pmatrix}.
	\]
	Here $$S(n) = \frac{1}{n}\big (1 - (-1)^n(\alpha_1+\bar\alpha_1) \big ) - \sum_{k=2}^n\frac{1}{k(k-1)}.$$ 
	Hence the determinant of this matrix is
	\[
		2 - (-1)^n(\alpha_1+\bar\alpha_1).
	\]
	Since the absolute value of $\alpha_1$ is $1$ and $\alpha_1$ is not an integer, the absolute value of the real part of $\alpha_1$ is less than $1$, and so the determinant is nonzero for all $n$.
	Therefore $\mathcal Q_n$ is small-span.
	
	By Corollary~\ref{cor:interlaceSpan}, no small-span template can contain $\mathcal X^{(4)}_{t,s}$ (see Figure~\ref{fig:span4graphs}) for all $s,t \geqslant 2$.
	Moreover, any subgraph of $\mathcal T_{2k}$ obtained by deleting two vertices that have the same neighbourhood is a $\Z$-graph and hence not a template.
	With these two restrictions on the subgraphs of $\mathcal T_{2k}$, we are done. 
\end{proof}

In the next proposition we classify cyclotomic small-span $\mathcal O_{\Q(\sqrt{d})}$-matrices, using the classification of cyclotomic matrices over quadratic integer rings.

\begin{proposition}\label{pro:cycSS}
	Let $\mathcal T$ be a connected cyclotomic small-span template on more than $6$ vertices.
	Then $\mathcal T$ is contained in either $\mathcal P_n$ or $\mathcal Q_n$ for some $n$.
\end{proposition}

\begin{proof}
	We can readily check the subgraphs of sporadic cyclotomic graphs of over quadratic integer rings (see the classifications \cite{GTay:cyclos10,Greaves:CycloEG11,Greaves:CycloRQ11}) that are not equivalent to a $\Z$-graph, to find that no such subgraph on more than $6$ vertices is small-span.
	The theorem then follows from Lemma~\ref{lem:cycPn} and Lemma~\ref{lem:cycQn}.
\end{proof}

Before completing the classification of small-span $\mathcal O_{\Q(\sqrt{d})}$-matrices, we first state some lemmata.

\begin{lemma}\label{lem:stayConnected}\cite[Lemma 2]{McKee:SmallSpan10}
	Let $G$ be a connected graph and let $u$ and $v$ be vertices of $G$ such that the distance from $u$ to $v$ is maximal.
	Then the subgraph induced by removing $u$ (and its incident edges) is connected.
\end{lemma}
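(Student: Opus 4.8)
The plan is to argue by contradiction using the cut-vertex structure that a disconnection of $G - u$ would force. Suppose that the induced subgraph $G - u$ is disconnected. Since $G$ itself is connected, this means precisely that $u$ is a cut vertex and that $G - u$ splits into at least two connected components. The vertex $v$ lies in exactly one of these components, so I can choose a vertex $w$ lying in some other component. The defining feature of $w$ is then that \emph{every} path in $G$ from $w$ to $v$ must pass through $u$: otherwise such a path, avoiding $u$, would connect two distinct components inside $G - u$, a contradiction.

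The next step is to turn this separation property into a distance estimate that beats $d(u,v)$. Take a shortest $w$-$v$ path in $G$; by the previous paragraph it passes through $u$, and being a shortest path it meets $u$ exactly once. Splitting the path at $u$ yields a $w$-$u$ subpath and a $u$-$v$ subpath, each of which is itself a shortest path between its own endpoints, since any shortcut along a subpath would shorten the whole path. Hence $d(w,v) = d(w,u) + d(u,v)$. Because $w$ is a vertex of $G - u$ we have $w \neq u$, so $d(w,u) \geqslant 1$, and therefore $d(w,v) \geqslant 1 + d(u,v) > d(u,v)$. This contradicts the hypothesis that $d(u,v)$ is maximal among all pairs of vertices of $G$, and so $G - u$ must in fact be connected.

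The argument is essentially routine, and I do not anticipate a genuine obstacle; the only point needing care is the clean decomposition of a shortest $w$-$v$ path into shortest subpaths at the cut vertex $u$, as this is exactly what converts the qualitative fact ``$u$ separates $w$ from $v$'' into the quantitative inequality $d(w,v) > d(u,v)$. It is also worth being explicit that ``maximal distance'' is understood to mean that the pair $(u,v)$ realises the diameter of $G$ (the maximum of $d(\cdot,\cdot)$ over all vertex pairs), since the entire contradiction rests on comparing $d(w,v)$ against this global maximum rather than against any merely local quantity.
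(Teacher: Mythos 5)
Your proof is correct. The paper itself offers no proof of this lemma (it is quoted with a citation to \cite[Lemma 2]{McKee:SmallSpan10}), and your argument --- $u$ would be a cut vertex, pick $w$ in a component of $G-u$ not containing $v$, and split a shortest $w$--$v$ path at $u$ to get $d(w,v) \geqslant d(w,u) + d(u,v) > d(u,v)$ --- is exactly the standard argument used in that source, so nothing further is needed.
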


\begin{lemma}\label{lem:ISMCycSmallSpan12}\cite[Theorem 3]{McKee:SmallSpan10}
	Let $A$ be a small-span $\Z$-graph on more than $12$ vertices.
	Then $A$ is cyclotomic.
\end{lemma}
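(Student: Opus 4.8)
The plan is to prove the contrapositive: a connected small-span $\Z$-matrix that fails to be cyclotomic has at most $12$ rows. Restricting to connected (indecomposable) matrices is harmless, since the spectrum of a decomposable matrix is the union of the spectra of its blocks, so the matrix is cyclotomic exactly when each block is, and each block stays small-span by Corollary~\ref{cor:interlaceSpan}. First I would normalise: by the equivalence discussed in the introduction we may assume every eigenvalue of $A$ lies in $[-2,2.5)$. With this normalisation the only way to violate cyclotomicity is at the top of the spectrum, $\lambda_n>2$; and since the span is less than $4$ this forces $\lambda_1>\lambda_n-4>-2$. Hence both $A+2I$ and $\tfrac52 I-A$ are positive definite, and in particular $-2$ is not an eigenvalue of $A$ --- the same ``$-2$ is not an eigenvalue'' phenomenon exploited in Lemmas~\ref{lem:cycPn} and~\ref{lem:cycQn}.

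The engine is the classification of cyclotomic integer symmetric matrices \cite{McKee:IntSymCyc07}: every connected cyclotomic $\Z$-matrix embeds in a maximal connected cyclotomic $\Z$-matrix, and the maximal ones are the $\Z$-members of two infinite families (the toral and cylindrical tessellations, the $\Z$-analogues of Figures~\ref{fig:maxcycs1}--\ref{fig:maxcycs3}) together with finitely many sporadic matrices of bounded size. Each member of the infinite families has span exactly $4$, its spectrum consisting of $\pm2$ with both signs occurring. Consequently, a \emph{connected cyclotomic} $\Z$-matrix on more than the (bounded) maximal sporadic size is necessarily a ``strip'' inside one of these tessellations, of completely known local shape.

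The substantive step is therefore to show that a connected small-span $\Z$-matrix $A$ on more than $12$ vertices is itself a subgraph of one of these infinite families, whence it is cyclotomic and $\lambda_n\le2$, contradicting the assumption. Here I would run the growing argument used throughout this paper: deleting a vertex $u$ at maximal distance from another (Lemma~\ref{lem:stayConnected}) keeps the graph connected, and by Corollary~\ref{cor:interlaceSpan} it stays small-span, so $A$ is built from smaller connected small-span matrices by adding one vertex at a time. By interlacing (Lemma~\ref{lem:interlacing}), $A$ can contain none of the span-$4$ obstruction subgraphs --- the $\Z$-analogues of the configurations $\mathcal X^{(1)}_n,\dots,\mathcal X^{(4)}_{s,t}$ of Figure~\ref{fig:span4graphs} and $X^{(5)}_4$ of Figure~\ref{fig:x4}. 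The off-diagonal entries lie in $\{-1,0,1\}$ and the charges in $\{-2,\dots,2\}$ by Lemma~\ref{lem:restrictEntries}, so at each growth step there are only finitely many ways to attach the new vertex; and avoiding every span-$4$ obstruction forces each attachment to merely prolong an existing strip. This pins the global structure into a tessellation strip once the vertex count is large, giving both the embedding into the cyclotomic families and the numerical cut-off.

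The main obstacle is exactly this last step: the finite-but-intricate verification that, for each local shape of strip, every legal attachment of a vertex either reproduces one of the span-$4$ obstructions or again yields a strip. In practice this is carried out by the exhaustive growing computation (as for Lemmas~\ref{lem:boundRowEntries} and~\ref{lem:boundRowEntries2}): one grows all connected small-span $\Z$-matrices vertex by vertex and checks that no non-cyclotomic one survives past $12$ vertices, the cyclotomic classification guaranteeing that the search terminates because all sufficiently large survivors are trapped in the infinite families. I would expect the bookkeeping of the sporadic base cases and the case split over attachments to be where essentially all the effort lies, with the value $12$ emerging as the size of the largest non-cyclotomic small-span $\Z$-matrix appearing before the strips take over.
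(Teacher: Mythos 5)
The paper does not give its own argument for this lemma---it is quoted directly from McKee \cite[Theorem 3]{McKee:SmallSpan10}, whose proof is precisely the computation-plus-classification scheme you describe: exhaustively grow connected small-span $\Z$-matrices, discard anything containing a span-$4$ obstruction by interlacing, and use the McKee--Smyth classification of cyclotomic $\Z$-matrices so that every sufficiently large survivor is trapped inside the infinite tessellation families, with $12$ emerging as the computational cut-off. Your sketch therefore takes essentially the same approach as the cited proof (and as this paper's own quadratic analogue, Theorem~\ref{thm:SSmbCyc}, which runs the identical minimal-counterexample/far-vertex-deletion argument).
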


\begin{lemma}\label{lem:simpatheq}\cite[Theorem 2.2]{Seid:Signed94}
	Let $G$ be a path whose edge-weights $\alpha$ satisfy $\abs{\alpha} = 1$.
	Then $G$ is strongly equivalent to a $\{0,1\}$-path.
\end{lemma}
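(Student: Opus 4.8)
The plan is to realise the strong equivalence using nothing but switchings (conjugations by diagonal matrices of units); the Galois twist $\sigma$ in the definition of strong equivalence will not be needed. Write the path as $v_1 - v_2 - \cdots - v_n$ and set $a_i = w(v_i,v_{i+1})$, so that $\abs{a_i}=1$ for every $i$. The one structural fact to record at the outset is that each $a_i$ is an \emph{unit} of the base ring $R$: from $a_i \bar a_i = \abs{a_i}^2 = 1$ together with the fact that $R = \mathcal O_{\Q(\sqrt d)}$ is stable under conjugation, we get $a_i^{-1} = \bar a_i \in R$. Hence any product of the $a_i$ and their conjugates is again a unit of $R$, so every diagonal matrix assembled from such products lies in $U_n(R)$ and defines a legitimate switching.

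First I would eliminate the edge-weights one at a time, sweeping from the $v_1$ end. Recall that conjugating by $\operatorname{diag}(1,\dots,u,\dots,1)$ with $u$ in position $j$ multiplies every out-neighbour weight of $v_j$ by $u$ and every in-neighbour weight by $\bar u$. Assume inductively that switchings at $v_2,\dots,v_i$ have already turned the weights of the first $i-1$ edges into $1$, and that the edge $v_i v_{i+1}$ currently carries some unit weight $b_i$. Switching at $v_{i+1}$ by $u = b_i$ sends this weight to $\bar u\, b_i = 1$; because in a path the only neighbours of $v_{i+1}$ are $v_i$ and $v_{i+2}$, the only other edge it can alter is $v_{i+1}v_{i+2}$, whose (still unprocessed) weight is merely multiplied by the unit $b_i$. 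Iterating down the path turns all $n-1$ edge-weights into $1$, and since the path is uncharged the diagonal stays $0$ throughout, so the result is exactly the $\{0,1\}$-path. Equivalently, one can do everything in a single conjugation by $D = \operatorname{diag}(1, a_1, a_1a_2, \dots, a_1\cdots a_{n-1})$, and a direct check gives $(DAD^*)_{i,i+1} = (a_1\cdots a_{i-1})\,a_i\,\overline{(a_1\cdots a_i)} = 1$, with $DD^* = I$ because each diagonal entry has absolute value $1$.

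The argument is essentially a routine switching computation, so the only points needing care are bookkeeping rather than genuine obstacles. The first is the ordering of the switchings: a switching at an interior vertex touches two incident edges at once, so one must process the edges in sequence, each switching repairing the newly exposed edge while disturbing only the next, not-yet-fixed, edge --- and never an edge that has already been set to $1$. The second is keeping every switching unit inside $R$, which is guaranteed by the opening observation that $\abs{a_i}=1$ forces $a_i$ (and hence each partial product) to be a unit; in the real quadratic case this degenerates to $a_i = \pm 1$, which is a unit trivially. Having produced the $\{0,1\}$-path by switchings alone, we obtain the claimed strong equivalence on taking $\sigma = \mathrm{id}$.
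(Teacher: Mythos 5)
Your argument is correct, but there is nothing in the paper to compare it against line by line: the paper does not prove Lemma~\ref{lem:simpatheq} at all, it imports it from the signed-graph literature as \cite[Theorem 2.2]{Seid:Signed94}. What you have written is the standard switching proof, and it is the same inductive sweep that the paper \emph{does} write out for the cycle analogue, Lemma~\ref{lem:IQcycleequiv} (``we can inductively switch the vertices so that $w(v_j,v_{j+1})=1$''), so your route is entirely in the spirit of the source. Two features of your write-up are worth keeping. First, your opening observation is exactly what makes the cited theorem usable in this more general setting: the quoted result concerns signings (edge-weights $\pm 1$), whereas here the weights are elements of $\mathcal O_{\Q(\sqrt{d})}$ of absolute value $1$; your remark that $\alpha\bar\alpha = 1$ with $\bar\alpha \in R$ forces each weight, and hence each partial product, to be a unit $u$ satisfying $u\bar u = 1$ is precisely what guarantees that your diagonal matrices lie in $U_n(R)$, so that switching alone realises the strong equivalence with $\sigma = \mathrm{id}$. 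Second, the closed-form conjugation checks out:
\[
(DAD^*)_{i,i+1} = (a_1\cdots a_{i-1})\,a_i\,\overline{(a_1\cdots a_i)} = \prod_{j\leqslant i} a_j\bar a_j = 1,
\]
with $DD^* = I$ because every diagonal entry $u$ of $D$ satisfies $u\bar u = 1$. One small caveat: where the lemma is invoked in Case 1 of the proof of Theorem~\ref{thm:SSmbCyc}, the paths carry charges; since switching never alters diagonal entries (as you note), the conclusion there should be read as ``all edge-weights may be taken equal to $1$'' with charges left untouched --- so your assumption that the path is uncharged is harmless for the lemma as stated, but is not needed for the argument, which goes through verbatim in the charged case.
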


\begin{lemma}\label{lem:IQcycleequiv}
	Let $G$ be an uncharged cycle all of whose edge-weights have absolute value $1$.
	Then $G$ is strongly equivalent to a graph having $\mathcal Q_n$ as a template for some $n \in \N$.
\end{lemma}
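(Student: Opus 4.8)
The plan is to normalise $G$ by unit-switchings until every edge-weight but one equals $1$, and then to read off the surviving edge as the distinguished irrational edge of $\mathcal Q_n$.

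First I would label the cycle $v_1, v_2, \dots, v_n$ consecutively and write $w_j = w(v_j, v_{j+1})$ for its edge-weights, indices taken modulo $n$, so that each $\abs{w_j} = 1$. The key preliminary observation is that every $w_j$ is a unit of $R = \mathcal O_K$: in the imaginary quadratic case the norm is $N(w_j) = w_j \bar w_j = \abs{w_j}^2 = 1$, while in the real quadratic case $\abs{w_j} = 1$ forces $w_j = \pm 1$; either way $w_j \in R^*$ with $\abs{w_j} = 1$.

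Next I would exploit the effect of switching on the edge-weights. Performing a $u_j$-switching at each vertex $v_j$ simultaneously amounts to conjugation by $D = \operatorname{diag}(u_1, \dots, u_n)$, under which $w_j \mapsto u_j w_j \bar u_{j+1} = u_j w_j u_{j+1}^{-1}$ (using $\abs{u_j} = 1$), and this telescopes around the cycle. Choosing $u_1 = 1$ and $u_{j+1} = u_j w_j$ recursively makes each $u_j = w_1 \cdots w_{j-1}$ a unit of $R$ of absolute value $1$, so $D \in U_n(R)$; and this choice sends $w_j \mapsto 1$ for $1 \leqslant j \leqslant n-1$ while $w_n \mapsto W := \prod_{j=1}^n w_j$. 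Hence $G$ is strongly equivalent to the cycle $B$ carrying $n-1$ edges of weight $1$ and a single edge of weight $W$. The product $W$ is the holonomy of the cycle and is the genuine invariant of the construction: any relabelling or further switching leaves $\prod_j w_j$ unchanged because the $u_j$-factors cancel cyclically.

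Finally, since $W \bar W = \prod_j \abs{w_j}^2 = 1$, the graph $B$ lies in $(\mathcal Q_n, R)$, whose distinguished edge $\alpha_1$ is pinned down only by $\alpha_1 \bar\alpha_1 = 1$; taking $\alpha_1 = W$ exhibits $B$ with template $\mathcal Q_n$, and $G$ is strongly equivalent to $B$ as required. When $W = \pm 1$ the cycle degenerates to a $\Z$-graph, the rational boundary of the template, which is exactly the situation forced in the real quadratic rings and in any ring lacking irrational units. The only point needing care is the bookkeeping of the switching around the cycle—checking that the telescoping choice of $u_j$ stays inside $U_n(R)$, which rests entirely on the units observation, and that the Hermitian partners $\bar w_j$ transform consistently; the conceptual crux is simply to recognise $\prod_j w_j$ as the switching invariant, so that all such cycles of a given length collapse to the single form $\mathcal Q_n$.
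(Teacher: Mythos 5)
Your proposal is correct and is essentially the paper's own argument: the paper likewise labels the cycle and inductively switches at successive vertices so that $w(v_j,v_{j+1})=1$ for $j\in\{1,\dots,n-1\}$, leaving a single edge of absolute value $1$, which is exactly your telescoping choice $u_{j+1}=u_jw_j$ with the remaining weight $W=\prod_j w_j$. Your additional observations (that each $w_j$ is a unit so $D\in U_n(R)$, and that $W$ is the switching invariant) merely make explicit what the paper's terser ``we can inductively switch'' leaves to the reader.
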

\begin{proof}
	Suppose $G$ has $n$ vertices.
	Label the vertices $v_1, \ldots, v_n$ so that $v_1$ is adjacent to $v_n$ and $v_j$ is adjacent to $v_{j+1}$ for all $j \in \left \{1,\ldots,n-1\right \}$.
	We can inductively switch the vertices of $G$ so that $w(v_j,v_{j+1}) = 1$ for all $j \in \left \{1,\ldots,n-1\right \}$, and $\abs{w(v_1,v_n)} = 1$.
\end{proof}

We are now ready to prove the following proposition.

\begin{proposition}\label{pro:SSmbCyc}
	Let $\mathcal T$ be a small-span template on more than $8$ vertices.
	Then $\mathcal T$ is cyclotomic.
\end{proposition}


\begin{proof}
	Suppose that $\mathcal T^\prime$ is a counterexample on the minimal number of vertices possible. 
	Then $\mathcal T^\prime$ is non-cyclotomic and has at least $9$ vertices.
	It follows from the minimality of $\mathcal T^\prime$ that any proper sub-template of $\mathcal T^\prime$ must have all its eigenvalues in the interval $[-2,2]$.
	But if $\mathcal T^\prime$ does not have any proper sub-templates, i.e., if all its subgraphs are equivalent to $\Z$-graphs, then $\mathcal T^\prime$ must be a cycle and its subpaths need not have all their eigenvalues in $[-2,2]$.
	Pick vertices $u$ and $v$ as far apart as possible in $\mathcal T^\prime$. 
	By Lemma~\ref{lem:stayConnected}, deleting either $u$ or $v$ leaves a connected subgraph on at least $8$ vertices.
	If the subgraph $G_u$ obtained by deleting $u$ from $\mathcal T^\prime$ is equivalent to a $\Z$-graph then let $G = G_v$ be the subgraph obtained by deleting $v$; otherwise let $G = G_u$.
	If $G$ is a template, then by Proposition~\ref{pro:cycSS}, either $G$ is equivalent to $\mathcal Q_n$ for some $n$ or $G$ is equivalent to a connected subgraph of $\mathcal P_n$ for some $n$.
	Otherwise, if $G$ is equivalent to a $\Z$-graph, then $\mathcal T^\prime$ must be a cycle and each irrational edge-weight $\alpha$ must satisfy $\abs{\alpha} = 1$.
	Moreover, by Lemma~\ref{lem:SScomps}, we have that every edge-weight of $\mathcal T^\prime$ must have absolute value equal to $1$.
	First we deal with the case where $G$ is equivalent to a $\Z$-graph.
	
	\paragraph{\emph{Case 1.}} Suppose $G$ is equivalent to a $\Z$-graph.
	Then, by above, $\mathcal T^\prime$ is a cycle whose edge-weights all have absolute value $1$.
	Let $C$ be a cycle on $n \geqslant 9$ vertices $v_0,v_1,\dots, v_{n-1}$ such that $v_j$ is adjacent to $v_{j+1}$ with subscripts reduced modulo $n$.
	Let each vertex $v_j$ of $C$ have charge $c_j \in \left \{-1,0,1\right \}$ and let each edge-weight have absolute value $1$.
	Now, if $C$ is small-span then each induced subpath of $C$ is small-span.
	Hence, $C$ is small-span only if each subpath $v_k v_{k+1} \cdots v_{k+n-2}$ is small-span for all $k$ where the subscripts are reduced modulo $n$.
	Since we are working up to equivalence, by Lemma~\ref{lem:simpatheq} we can assume that all edges in these paths have weight $1$.
	We have computed all small-span $\Z[i]$ and $\Z[\omega]$-cycles on up to $13$ vertices having all edge-weight of absolute value $1$.
	The ones on more than $8$ vertices have $\mathcal Q_n$ (for some $n$) as a template.
	By Lemma~\ref{lem:ISMCycSmallSpan12} all small-span $\Z$-paths on at least $13$ vertices are cyclotomic and hence have charges (if at all) only at each end-vertex.
	It follows that $C$ is small-span only if $c_j = 0$ for all $j \in \left \{0,\dots, n-1\right \}$.
	Therefore, $\mathcal T^\prime$ must be uncharged.
	Hence $\mathcal T^\prime$ is equivalent to $\mathcal Q_n$ for some $n$ (see Lemma~\ref{lem:IQcycleequiv}).
	This contradicts $\mathcal T^\prime$ being non-cyclotomic. 
	Note that if $C$ had only $8$ vertices then the above argument would not work since the following $\Z[\omega]$-cycle $\mathfrak C_8$ is small-span:
	\[
	\begin{tikzpicture}
		\newdimen\rad
		\rad=1.2cm

		\draw[pedge] (157.5:\rad) -- (202.5:\rad);
		\draw[wnedge] (247.5:\rad) -- (292.5:\rad);
		\draw[pedge] (337.5:\rad) -- (382.5:\rad);
		\draw[pedge] (67.5:\rad) -- (112.5:\rad);
		\foreach \x in {112.5,202.5,292.5,382.5}
		{
			\draw[pedge] (\x:\rad) -- (\x+45:\rad);			%
			\draw (\x:\rad) node[vertex] {};
			\draw (\x+45:\rad) node[pc] {$+$};
	    }
	\end{tikzpicture},
	\]
	where the irrational edge-weight is $-\omega$.
	
	Now we have two remaining cases to consider: the case where $G$ is contained in $\mathcal P_n$ and the case where $G$ is equivalent to $\mathcal Q_n$ for some $n$.
	In each case, by Lemma~\ref{lem:SScomps}, we can exclude the possibility that $G$ contains a triangle having fewer than two charged vertices.
	And similarly, by Lemma~\ref{lem:SScomps}, we need only consider charges from the set $\left \{-1,0,1\right \}$.
	Moreover, we can exclude any graph equivalent to $X^{(6)}_5$, $X^{(7)}_5$, $X^{(8)}_5$, or $X^{(9)}_8$ (see Figure~\ref{fig:x6}) as a subgraph of a small-span graph since these graphs have span at least $4$.
	\begin{figure}[htbp]
		\centering
			\begin{tikzpicture}
				\begin{scope}[yshift=-4.2cm, xshift=1.5cm]
					\foreach \type/\pos/\name/\charge in {{pc/(0,0)/b1/{+}}, {vertex/(1,0)/b2/{}}, {vertex/(2,0)/e2/{}}, {vertex/(3,0)/e3/{}}, {vertex/(1,0.8)/e1/{}}}
						\node[\type] (\name) at \pos {$\charge$};
					\foreach \edgetype/\source/ \dest in {pedge/b1/b2, pedge/e2/b2, pedge/b2/e1, pedge/e3/e2}
						\path[\edgetype] (\source) -- (\dest);
					\node at (1.6,-0.6) {$X^{(6)}_5$};
				\end{scope}
			\end{tikzpicture}
			\begin{tikzpicture}
				\begin{scope}[yshift=-4.2cm, xshift=1.5cm]
					\foreach \type/\pos/\name/\charge in {{pc/(0,0)/b1/{+}}, {vertex/(1,0)/b2/{}}, {vertex/(2,0)/e2/{}}, {vertex/(3,0)/e3/{}}, {pc/(1,0.8)/e1/{+}}}
						\node[\type] (\name) at \pos {$\charge$};
					\foreach \edgetype/\source/ \dest in {pedge/b1/b2, pedge/e2/b2, pedge/b2/e1, pedge/e3/e2}
						\path[\edgetype] (\source) -- (\dest);
					\node at (1.6,-0.6) {$X^{(7)}_5$};
				\end{scope}
			\end{tikzpicture}
			\begin{tikzpicture}
				\begin{scope}[yshift=-4.2cm, xshift=1.5cm]
					\foreach \type/\pos/\name/\charge in {{pc/(0,0)/b1/{+}}, {vertex/(1,0)/b2/{}}, {vertex/(2,0)/e2/{}}, {vertex/(3,0)/e3/{}}, {pc/(1,0.8)/e1/{-}}}
						\node[\type] (\name) at \pos {$\charge$};
					\foreach \edgetype/\source/ \dest in {pedge/b1/b2, pedge/e2/b2, pedge/b2/e1, pedge/e3/e2}
						\path[\edgetype] (\source) -- (\dest);
					 \node at (1.6,-0.6) {$X^{(8)}_5$};
				\end{scope}
			\end{tikzpicture}
			
			\begin{tikzpicture}
				\begin{scope}[yshift=-4.2cm, xshift=1.5cm]
					\foreach \type/\pos/\name/\charge in {{vertex/(-2,0)/z1/{}}, {vertex/(-1,0)/a1/{}}, {vertex/(0,0)/b1/{}}, {vertex/(1,0)/b2/{}}, {vertex/(2,0)/e2/{}}, {vertex/(3,0)/e3/{}}, {vertex/(4,0)/e4/{}}, {vertex/(1,0.8)/e1/{}}}
						\node[\type] (\name) at \pos {$\charge$};
					\foreach \edgetype/\source/ \dest in {pedge/z1/a1, pedge/a1/b1, pedge/b1/b2, pedge/e2/b2, pedge/b2/e1, pedge/e3/e2, pedge/e3/e4}
						\path[\edgetype] (\source) -- (\dest);
					\node at (1,-0.6) {$X^{(9)}_8$};
				\end{scope}
			\end{tikzpicture}
			
			\begin{tikzpicture}
				\begin{scope}[yshift=-4.2cm, xshift=1.5cm]
					\foreach \type/\pos/\name/\charge in {{vertex/(-2,0)/z1/{}}, {pc/(-1,0)/a1/{+}}, {vertex/(0,0)/b1/{}}, {vertex/(1,0)/b2/{}}, {vertex/(2,0)/e2/{}}, {vertex/(3,0)/e3/{}}, {vertex/(4,0)/e4/{}}, {vertex/(5,0)/e5/{}}, {vertex/(6,0)/e6/{}}}
						\node[\type] (\name) at \pos {$\charge$};
					\foreach \edgetype/\source/ \dest in {pedge/z1/a1, pedge/a1/b1, pedge/b1/b2, pedge/e2/b2, pedge/e3/e2, pedge/e3/e4, pedge/e5/e4, pedge/e5/e6}
						\path[\edgetype] (\source) -- (\dest);
					\node at (2,-0.6) {$X^{(10)}_9$};
				\end{scope}
			\end{tikzpicture}
			
			\begin{tikzpicture}
				\begin{scope}[yshift=-4.2cm, xshift=1.5cm]
					\foreach \type/\pos/\name/\charge in {{vertex/(-2,0)/z1/{}}, {pc/(-1,0)/a1/{+}}, {vertex/(0,0)/b1/{}}, {vertex/(1,0)/b2/{}}, {vertex/(2,0)/e2/{}}, {vertex/(3,0)/e3/{}}, {vertex/(4,0)/e4/{}}, {vertex/(5,0)/e5/{}}, {vertex/(6,0)/e6/{}}}
						\node[\type] (\name) at \pos {$\charge$};
					\foreach \edgetype/\source/ \dest in {pedge/z1/a1, pedge/a1/b1, pedge/b1/b2, pedge/e2/b2, pedge/e3/e2, pedge/e3/e4, pedge/e5/e4, wwedge/e5/e6}
						\path[\edgetype] (\source) -- (\dest);
					\node at (2,-0.6) {$X^{(11)}_9$};
				\end{scope}
			\end{tikzpicture}
			
			\begin{tikzpicture}
				\begin{scope}[yshift=-4.2cm, xshift=1.5cm]
					\foreach \type/\pos/\name/\charge in {{vertex/(-2,0)/z1/{}}, {vertex/(-1,0)/a1/{}}, {vertex/(0,0)/b1/{}}, {vertex/(1,0)/b2/{}}, {vertex/(2,0)/e2/{}}}
						\node[\type] (\name) at \pos {$\charge$};
					\foreach \edgetype/\source/ \dest in {pedge/z1/a1, wwedge/a1/b1, pedge/b1/b2, pedge/e2/b2}
						\path[\edgetype] (\source) -- (\dest);
					\node at (0,-0.6) {$X^{(12)}_5$};
				\end{scope}
			\end{tikzpicture}
			\begin{tikzpicture}
				\begin{scope}[yshift=-4.2cm, xshift=1.5cm]
					\foreach \type/\pos/\name/\charge in {{pc/(-2,0)/z1/{+}}, {vertex/(-1,0)/a1/{}}, {vertex/(0,0)/b1/{}}, {vertex/(1,0)/b2/{}}, {vertex/(2,0)/e2/{}}}
						\node[\type] (\name) at \pos {$\charge$};
					\foreach \edgetype/\source/ \dest in {pedge/z1/a1, wwedge/a1/b1, pedge/b1/b2, pedge/e2/b2}
						\path[\edgetype] (\source) -- (\dest);
					\node at (0,-0.6) {$X^{(13)}_5$};
				\end{scope}
			\end{tikzpicture}
		\caption{The graphs $X^{(6)}_5$, $X^{(7)}_5$, $X^{(8)}_5$, $X^{(9)}_8$, $X^{(10)}_9$, $X^{(11)}_9$, $X^{(12)}_5$, and $X^{(13)}_5$ which all have span greater than or equal to $4$.}
		\label{fig:x6}
	\end{figure}
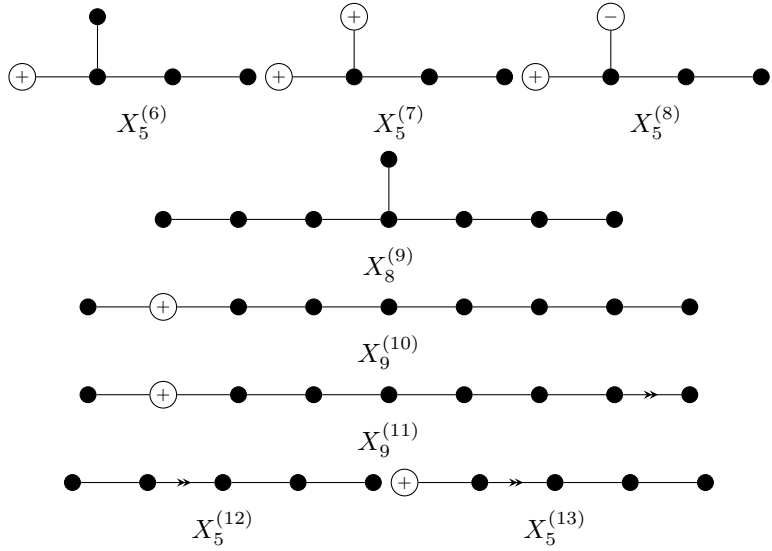
	
	\paragraph{\emph{Case 2.}} Suppose $G$ is equivalent to a sub-template of $\mathcal P_n$ for some $n$.
	In this case $\mathcal T^\prime$ cannot be a cycle for, if it were, it would have a sub-template on more than $6$ vertices that is not equivalent to a subgraph of either $\mathcal P_n$ or $\mathcal Q_n$ contradicting Proposition~\ref{pro:cycSS}.
	By Lemma~\ref{lem:SScomps}, a charged vertex cannot be incident to an edge-weight of absolute value greater than $1$.
	Now, $G$ has at most one charged vertex.
	Hence, if $\mathcal T^\prime$ contains a triangle then it can have at most two charged vertices.
	We deduce therefore that $\mathcal T^\prime$ must be triangle-free, since otherwise, by Lemma~\ref{lem:SScomps}, $\mathcal T^\prime$ would contain $\mathcal X^{(2)}_n$ for some $n$.
	Moreover, since we have excluded $X^{(6)}_5$, $X^{(7)}_5$, and $X^{(8)}_5$, no leaf (a vertex having only one neighbour) can share its adjacent vertex with a charged leaf.
	
	Since it is equivalent to a subgraph of $\mathcal P_n$, we have that $G$ is a path.
	Recall that we obtained $G$ by deleting either $u$ or $v$ which are vertices at the maximal distance from one another.
	Label the vertices of $G$ by $v_1, v_2, \dots, v_{r}$ where $v_j$ is adjacent to $v_{j+1}$ for $j \in \{1,\dots,r-1\}$.
	Then we can obtain $\mathcal T^\prime$ from $G$ by attaching a vertex to one of the vertices $v_1$, $v_2$, $v_{r-1}$, or $v_{r}$.
	Thus, in order for $\mathcal T^\prime$ not to contain a subgraph equivalent to $X^{(6)}_5$, $X^{(7)}_5$, $X^{(8)}_5$, $X^{(10)}_9$, $X^{(11)}_9$, $X^{(12)}_5$, $X^{(13)}_5$, $\mathcal X^{(1)}_k$, or $\mathcal X^{(3)}_k$, the template $\mathcal T^\prime$ must be equivalent to a sub-template of either $\mathcal X^{(1)}_k$, $\mathcal X^{(3)}_k$, or $\mathcal P_k$ for some $k$.
	Since each of these templates are cyclotomic, we have established a contradiction.
	
	\paragraph{\emph{Case 3.}} Suppose $G$ is equivalent to $\mathcal Q_n$ for some $n$.
	Since we have excluded graphs equivalent to $X^{(6)}_5$ and $X^{(9)}_8$ as subgraphs, and triangles with fewer than two charged vertices are forbidden, there do not exist any possible small-span supergraphs $\mathcal T^\prime$ having $G$ as a subgraph.
\end{proof}

\section{Missing small-span polynomials}
\label{sec:4}

Finally, we turn our attention to the question of which polynomials appear as minimal polynomials of small-span matrices.
We define a \textbf{cosine polynomial} to be a monic integer polynomial having all its zeros contained in the interval $[-2,2]$, and a \textbf{non-cosine polynomial} to be a monic totally-real integer polynomial with at least one zero lying outside $[-2,2]$.
McKee \cite{McKee:SmallSpan10} found six small-span polynomials of low degree that are not the minimal polynomial of any symmetric $\Z$-matrix: three degree-$6$ cosine polynomials
\begin{align*}
	& x^6 - x^5 - 6x^4 + 6x^3 + 8x^2 - 8x + 1, \\
	& x^6 - 7x^4 + 14x^2 - 7, \\
	& x^6 - 6x^4 + 9x^2 - 3,
\end{align*}
and three degree-$7$ non-cosine polynomials
\begin{align*}
	& x^7 - x^6 - 7x^5 + 5x^4 + 15x^3 -5x^2- 10x - 1, \\
	& x^7 - 8x^5 + 19x^3 - 12x-1, \\
	& x^7 - 2x^6 - 6x^5 + 11x^4 + 11x^3 -17x^2- 6x +7.
\end{align*}
Suppose a non-cosine polynomial of degree more than $8$ is the minimal polynomial of a Hermitian $R$-matrix for some quadratic integer ring $R$.
Then, by Proposition~\ref{pro:SSmbCyc}, it is the minimal polynomial of an integer symmetric matrix.
In fact, by our computations, except for the graph $\mathfrak C_8$ (mentioned in the proof of Proposition~\ref{pro:SSmbCyc}), the above holds for non-cosine polynomials of degree more than $6$.
We record this observation as a proposition, below. 
Let $Q(x)$ be the characteristic polynomial of $\mathfrak C_8$.
We have established the following proposition.

\begin{proposition}\label{pro:newStuff}
	Let $R$ be a quadratic integer ring and let $f \ne \pm Q(x)$ be a non-cosine polynomial of degree more than $6$.
	If $f$ is the minimal polynomial of some Hermitian $R$-matrix then $f$ is the minimal polynomial of some integer symmetric matrix.
\end{proposition}

Hence, in particular, the three degree-$7$ non-cosine polynomials are not minimal polynomials of any Hermitian $R$-matrix for any quadratic integer ring $R$.

We do, however, find that two of the degree-$6$ cosine polynomials above are characteristic polynomials of Hermitian $\Z[\omega]$-matrices.
The graph
\begin{center}
	\begin{tikzpicture}[scale=1.5, auto]
		\foreach \pos/\name/\sign/\charge in {{(0,0)/a/zc/{}}, {(1,0)/b/zc/{}}, {(0,1)/c/zc/{}}, {(1,1)/d/zc/{}}, {(2,0)/e/zc/{}}, {(2,1)/f/zc/{}}}
			\node[\sign] (\name) at \pos {$\charge$};
		\foreach \edgetype/\source/\dest/\weight in {{pedge/a/b/{}}, {pedge/b/d/{}}, {pedge/d/c/{}}, {wnedge/c/a/{}}, {pedge/c/f/{}}, {pedge/a/e/{}}, {nedge/f/e/{}}, {pedge/f/c/{}}}
			\path[\edgetype] (\source) -- node[weight] {$\weight$} (\dest);
	\end{tikzpicture}
\end{center}
where $\alpha_1 = \omega$, has characteristic polynomial $x^6 - 7x^4 + 14x^2 - 7$; and the graphs $\mathcal Q_6$ and
\begin{center}
	\begin{tikzpicture}[scale=1.5, auto]
		\foreach \pos/\name/\sign/\charge in {{(-0.6,0.5)/a/zc/{}}, {(0,1)/b/zc/{}}, {(0,0)/c/zc/{}}, {(0.6,0.5)/d/zc/{}}, {(1.6,0.5)/e/zc/{}}, {(2.6,0.5)/f/zc/{}}}
			\node[\sign] (\name) at \pos {$\charge$};
		\foreach \edgetype/\source/\dest/\weight in {{pedge/a/b/{}}, {pedge/b/d/{}}, {wnedge/d/c/{}}, {pedge/c/a/{}}, {pedge/d/e/{}}, {pedge/f/e/{}}}
			\path[\edgetype] (\source) -- node[weight] {$\weight$} (\dest);
	\end{tikzpicture}
\end{center}
where $\alpha_1 = \omega$ in both, have characteristic polynomial $x^6 - 6x^4 + 9x^2 - 3$.

The polynomial $p(x) = x^6 - x^5 - 6x^4 + 6x^3 + 8x^2 - 8x + 1$ remains somewhat elusive as it is \emph{not} the minimal polynomial of any Hermitian $R$-matrix for any quadratic integer ring $R$.
For suppose that $p(x)$ is the minimal polynomial of some Hermitian $R$-matrix $A$.
Then each eigenvalue of $A$ is a zero of $p(x)$ \cite[\S 11.6]{Hartley:1970fk} and the minimal polynomial $p(x)$ divides the characteristic polynomial $\chi_A(x)$.
Since $p(x)$ is irreducible, $\chi_A(x)$ must be some power of $p(x)$.
Therefore, we need to check that $p(x)$ is not the minimal polynomial of any $r \times r$ Hermitian $R$-matrix where $6$ divides $r$.
Both $\mathcal P_n$ and $\mathcal Q_n$ have span larger than the span of $p(x)$ for $n \geqslant 18$ and we have checked all possible matrices for $r = 6$ and $12$.
Hence we have Theorem~\ref{thm:main}.

Finally, we point out a matrix that comes close to having $p(x)$ as its minimal polynomial: the $\Z[\omega]$-graph $\mathcal Q_7$ where $\alpha_1 = -\omega$, has as its characteristic polynomial the polynomial $(x+1)(x^6 - x^5 - 6x^4 + 6x^3 + 8x^2 - 8x + 1)$.

\section{Acknowledgement}

The author is grateful for the improvements to the presentation suggested by the referee.

\bibliographystyle{plain} 
\bibliography{/Users/Gary/Dropbox/Papers/bib}

\begin{thebibliography}{10}

\bibitem{Seid:Signed94}
P.J. Cameron, J.J. Seidel, and S.V. Tsaranov.
\newblock Signed graphs, root lattices, and {C}oxeter groups.
\newblock {\em Journal of Algebra}, 164:173--209, 1994.

\bibitem{Cap:SmallSpan10}
S.~Capparelli, A.~{Del Fra}, and C.~Sci{{\`o}}.
\newblock On the span of polynomials with integer coefficients.
\newblock {\em Math. Comp.}, 79(270):967--981, 2010.

\bibitem{Cau:Interlace}
A.-L. Cauchy.
\newblock Sur l'{\'e}quation {\`a} l'aide de laquelle on d{\'e}termine les
  in{\'e}galit{\'e}s s{\'e}culaires des mouvements des plan\`{e}tes.
\newblock In {\em Oeuvres compl\`{e}tes, IIi{\`e}me S{\'e}rie}.
  Gauthier-Villars, 1829.

\bibitem{Dob:Int08}
E.~Dobrowolski.
\newblock A note on integer symmetric matrices and {M}ahler's measure.
\newblock {\em Canad. Math. Bull.}, 51(1):57--59, 2008.

\bibitem{Estes:Eigen92}
D.R. Estes.
\newblock Eigenvalues of symmetric integer matrices.
\newblock {\em J. Number Theory}, 42(3):292--296, 1992.

\bibitem{EsGu:MinPolISM}
D.R. Estes and R.~Guralnick.
\newblock Minimal polynomials of integral symmetric matrices.
\newblock {\em Linear Algebra Appl.}, 192:83--99, 1993.

\bibitem{Fisk:Interlace05}
S.~Fisk.
\newblock A very short proof of {C}auchy's interlace theorem for eigenvalues of
  {H}ermitian matrices.
\newblock {\em Amer. Math. Monthly}, 112(2):118, February 2005.

\bibitem{Flamm11:smallspan}
V.~Flammang, G.~Rhin, and Q.~Wu.
\newblock The totally real algebraic integers with diameter less than $4$.
\newblock {\em Moscow J. Combinatorics and Number Theory}, 1(1):17--25, 2011.

\bibitem{Greaves:Thesis12}
G.R. Greaves.
\newblock {\em Cyclotomic {M}atrices over {Q}uadratic {I}nteger {R}ings}.
\newblock PhD thesis, Royal Holloway, University of London, 2012.

\bibitem{Greaves:CycloRQ11}
G.R. Greaves.
\newblock Cyclotomic matrices over real quadratic integer rings.
\newblock {\em Linear Algebra Appl.}, 437(9):2252 -- 2261, November 2012.

\bibitem{Greaves:CycloEG11}
G.R. Greaves.
\newblock Cyclotomic matrices over the {E}isenstein and {G}aussian integers.
\newblock {\em J. Algebra}, 372:560--583, December 2012.

\bibitem{Hartley:1970fk}
B.~Hartley and T.~O. Hawkes.
\newblock {\em Rings, Modules and Linear Algebra}.
\newblock Chapman and Hall/CRC, 1970.

\bibitem{Hoffman:1972fk}
A.J. Hoffman.
\newblock On limit points of spectral radii of non-negative symmetric integral
  matrices, 1972.

\bibitem{Hoff:EigenOGraph75}
A.J. Hoffman.
\newblock Eigenvalues of graphs.
\newblock In D.~R. Fulkerson, editor, {\em Studies in Graph Theory}, pages
  225--245, Washington, 1975. Mathematical Association of America.

\bibitem{Hwang:Interlace04}
S.-G. Hwang.
\newblock Cauchy's interlace theorem for eigenvalues of {H}ermitian matrices.
\newblock {\em Amer. Math. Monthly}, 111:157--159, February 2004.

\bibitem{Kron:cyclo57}
L.~Kronecker.
\newblock Zwei {S}{\"a}tze {\"u}ber {G}leichungen mit ganzzahligen
  {C}oefficienten.
\newblock {\em J. Reine Angew. Math.}, 53:173--175, 1857.

\bibitem{McKee:SmallSpan10}
J.F. McKee.
\newblock Small-span characteristic polynomials of integer symmetric matrices.
\newblock In Guillaume Hanrot, {Fran\c cois} Morain, and Emmanuel {Thom{\'e}},
  editors, {\em Algorithmic Number Theory}, volume 6197 of {\em Lecture Notes
  in Comput. Sci.} Springer--Verlag, 2010.

\bibitem{McKee:IntSymCyc07}
J.F. McKee and C.J. Smyth.
\newblock Integer symmetric matrices having all their eigenvalues in the
  interval [-2,2].
\newblock {\em J.Algebra}, 317:260--290, 2007.

\bibitem{Rob62:largeSpan}
R.M. Robinson.
\newblock Intervals containing infinitely many sets of conjugate algebraic
  integers.
\newblock In {\em Mathematical Analysis and Related Topics: Essays in Honor of
  George P\'{o}lya}, pages 305--315. Stanford, 1962.

\bibitem{Rob:SmallSpan64}
R.M. Robinson.
\newblock Algebraic equations with span less than 4.
\newblock {\em Math. Comp.}, 18(88):pp. 547--559, 1964.

\bibitem{Schur:FiniteSmallSpan}
I.~Schur.
\newblock {\"U}ber die {V}erteilung der {W}urzeln bei gewissen algebraischen
  {G}leichungen mit ganzzahligen {K}oeffizienten.
\newblock {\em Math. Zeit.}, 1(4):377--402, 1918.

\bibitem{GTay:cyclos10}
G.~Taylor.
\newblock Cyclotomic matrices and graphs over the ring of integers of some
  imaginary quadratic fields.
\newblock {\em J. Algebra}, 331:523--545, 2011.

\end{thebibliography}
\end{document}